\newtheorem{thm}{Theorem}[section]
\newtheorem{exa}[thm]{Example}
\newtheorem{defi}[thm]{Definition}
\newcommand{\beq}{\begin{equation}}
\newcommand{\eeq}{\end{equation}}
\def\DD{D}
\newcommand{\trl}{\triangleleft}
\newcommand{\trr}{\triangleright}
\newcommand{\vphi}{\varphi}
\newcommand {\emptycomment}[1]{} 
\newcommand{\be }{\begin{equation}}
\newcommand{\ee }{\end{equation}}
\newcommand{\pf}{\noindent{\bf Proof.}\ }
\newcommand{\g}{\mathfrak g}
\newcommand{\h}{\mathfrak h}
\newcommand{\m}{\mathfrak m}
\newcommand{\huaV}{\mathcal{V}}
\newcommand{\br}[1]{   [ \cdot,    \cdot  ]   }
\newcommand{\dM}{\mathrm{d}}
\newcommand{\gl}{\mathfrak {gl}}
\newcommand{\Ker}{\mathrm{Ker}}
\newcommand{\End}{\mathrm{End}}
\newcommand{\ad}{\mathrm{ad}}
\newcommand{\ve}{\mathrm{v}}
\newcommand{\V}{\mathbb{V}}
\begin{document}

\title{{Lie triple 2-algebras}}

\author{Tao Zhang, Zhang-Ju Liu}


\date{}

\footnotetext{{\it{Keyword}:  Lie triple 2-algebras, cohomology, crossed modules }}
\footnotetext{{\it{MSC2020}}: 17A99, 17B56, 18N25, 18G45.}


\maketitle

\begin{abstract}
We construct a new cohomology theory for Lie triple algebras.
Using this cohomology, we introduce the notions of  2-term $L_\infty$-triple algebras and Lie triple 2-algebras. We prove that the category of 2-term  $L_\infty$-triple algebras is equivalent to the category of Lie triple 2-algebras.
Crossed modules of  Lie triple algebras are studied in detail.
\end{abstract}


\section{Introduction}

The notion of Lie triple algebras (also called a general Lie triple system or a Lie-Yamaguti algebra) was introduced by Yamaguti in \cite{Yam0}. Yamaguti provided the representation and cohomology theory for Lie triple algebras in \cite{Yam1}. Kinyon and Weinstein investigated its relationship with Courant algebroids and enveloping Lie algebra in \cite{KW}. Lie-Yamaguti algebras related to simple Lie algebras and irreducible Lie-Yamaguti algebras were studied in \cite{BDE, BEM}. The formal deformation and extension theory for Lie triple algebras were investigated in \cite{LCM, Zhang1}. However, it has been found that the integrable obstruction of formal deformation can not be controlled by the Yamaguti's cohomology group.
Recently, the free connection algebras relate to Lie triple algebras were studied in \cite{MS,St}.

The algebraic theory of Lie 2-algebras, which is a categorification of Lie algebras, was extensively studied by Baez and Crans in \cite{BC}.
This is a 2-vector space endowed with a natural transformation known as the Jacobiator, which must satisfy certain coherence laws.
It is well know that this  coherence laws are given by the 3-cocycle condition for Lie algebras.
Recently, categorification of Leibniz algebras, omni-Lie algebras, Lie superalgebras and 3-Lie algebras was given in \cite{SL,SLZ, ZL, ZLS}.
For further advancements in this area of algebraic structures, refer to \cite{LST}.

The aim of this paper is to solve the following two problems.
The first problem is whether there exists a new cohomology theory for Lie triple algebras that goes beyond Yamaguti's cohomology. The second problem is whether it is possible to give a categorification of Lie triple algebras. We provide positive answers to both of these questions.
Roughly speaking, Yamaguti's cochain complex was defined as follows:
\begin{eqnarray*}
 C(T,V)\xrightarrow[]{\delta} C^{(2,3)}(T,V)\xrightarrow[]{\delta} C^{(4,5)}(T,V)\xrightarrow[]{\delta} C^{(6,7)}(T,V)\xrightarrow[]{\delta}\cdots
\end{eqnarray*}
In this paper, we extend Yamaguti's cochain complex by constructing a new cochain complex:
\begin{eqnarray*}
 C(T,V)\xrightarrow[]{\Delta_1}  C^{(2,3)}(T,V)\xrightarrow[]{\Delta_2}C^{(3,4)}(T,V)\times C^{(4,5)}(T,V)
\xrightarrow[]{\Delta_3} C^{(5,6)}(T,V)\times C^{(6,7)}(T,V)\xrightarrow[]{\Delta_5}\cdots
\end{eqnarray*}
We will verify that $\Delta_3\circ\Delta_2=0$, which allows us to obtain a new $(3,4,4,5)$-cohomology theory. Further details of this cochain complex can be found in Theorem \ref{mainthm1}.
This new cohomology theory plays a crucial role in the categorification of Lie triple algebras. We introduce two key notions derived from a cohomological and categorical point of view, namely 2-term $L_\infty$-triple algebras and Lie triple 2-algebras.
Due to the complicity nature of the cohomology of Lie triple algebras, it is difficult to obtain 2-term $L_\infty$-triple algebras using directly Yamaguti's cohomology for Lie triple algebras.
We overcome this difficulty by using this new $(3,4,4,5)$-cohomology theory for Lie triple algebras.
We establish the equivalence between the category of 2-term $L_\infty$-triple algebras and the category of Lie triple 2-algebras. For a detailed proof, refer to Theorem \ref{mainthm2}.

The second part of this paper is devoted to investigating some special cases of Lie triple 2-algebras.
The first case is the {\em skeletal} Lie triple 2-algebra which is equivalent to a Lie triple algebra with a $(3,4,4,5)$-cocycle.
The second one is the {\em strict} Lie triple 2-algebra which is equivalent to the crossed module of Lie triple algebras.
We will construct crossed module of Lie triple algebras by using the crossed module of Leibniz algebras and the reductive crossed module of Lie algebras.
Finally, we will investigate the crossed module extensions of Lie triple algebras by using the $(3,4,4,5)$-cohomology groups.

 The paper is organized as follows. In Section 2, we revisit the concept of Lie triple algebras and study its cohomology. In Section 3, we define the concept of  2-term $L_\infty$-triple algebras and show that there is a category of 2-term $L_\infty$-triple algebras. In Section 4, we introduce the notion of Lie triple 2-algebras, and prove that the category of 2-term  $L_\infty$-triple algebras is equivalent to the category of Lie triple 2-algebras. In Section 5, we first introduce the concept of crossed modules of Lie triple algebras. Then we show that there is a one-to-one correspondence between strict Lie triple 2-algebras and crossed modules of Lie triple algebras. Finally, we classify   crossed module extensions of Lie triple algebras by the $(3,4,4,5)$-cohomology groups.

Throughout this paper, all algebras are assumed to be over an algebraically closed field $k$ of characteristic different from 2 and 3.
The space of linear maps from a vector space $V$ to $V$ is denoted by $\End(V)$.

\section{A new cohomology theory for Lie triple algebras}
In this section, we  first review definitions and notations related to Lie triple algebras.


\begin{defi}[\cite{Yam0}]\label{def:LYA}
A Lie triple algebra consists of a vector space $T$ together with a bilinear map and a trilinear
map $[\cdot,\cdot]:  T\times T\to  T$, $[\cdot,\cdot,\cdot]:  T\times T\times T\to  T$ such that: $\forall\, x_i, y_i \in  T$,
\begin{itemize}
\item[$\bullet$] {\rm(LY1)}\quad $[x_1,x_2]+[x_2,x_1]=0$;
\item[$\bullet$] {\rm(LY2)}\quad $[x_1, x_2,x_3]+[x_2, x_1,x_3]=0$;
\item[$\bullet$] {\rm(LY3)}\quad $[[x_1, x_2],x_3]+c.p.+[x_1, x_2,x_3]+c.p.=0$;
\item[$\bullet$] {\rm(LY4)}\quad $[[x_1, x_2],x_3,y_1]+c.p.=0$;
\item[$\bullet$] {\rm(LY5)}\quad $[x_1, x_2, [y_1, y_2]] = [[ x_1, x_2, y_1], y_2] + [y_1, [ x_1, x_2, y_2]]$;
\item[$\bullet$] {\rm(LY6)}\quad
$[x_1, x_2, [y_1, y_2, y_3]] = [[ x_1, x_2, y_1], y_2, y_3] + [y_1, [ x_1, x_2, y_2], y_3] + [y_1, y_2, [ x_1, x_2, y_3]]$,
\end{itemize}
for all $x_i, y_i \in T$, where c.p. means cyclic permutations with respect to $x_1$, $x_2$ and $x_3$.
\end{defi}
Note that (LY1) and (LY2) means the bilinear map $[\cdot,\cdot]$ is the antisymmetric and the trilinear map $[\cdot,\cdot,\cdot]$ is antisymmetric only in the first two variables. We call (LY3) the Jacobi identity and  (LY6) the fundamental identity. Any Lie algebra considered with the trilinear map $[x_1, x_2,x_3]:=[[x_1, x_2],x_3]$ is a Lie triple algebra. In this case, (LY3) is just the  Jacobi identity of Lie algebra.
Another special case of Lie triple algebra with $[\cdot,\cdot]=0$  is a Lie triple system which only satisfying then (LY2), (LY3) and (LY6) .
The cohomology theory of  Lie triple systems was studied  in \cite{KT,Zhang0}

A homomorphism between two Lie triple algebras $T$ and $T'$ is a map $\varphi:T\to T'$ satisfying
\begin{eqnarray}
\varphi([x_1, x_2])=[\varphi (x_1), \varphi (x_2)]',\quad \varphi([x_1, x_2,x_3])=[\varphi (x_1), \varphi (x_2),\varphi (x_3)]'.
\end{eqnarray}

Denoted by $L:=\otimes{}^2{T}$, which is called fundamental set.
The elements $X=(x_1,x_2), Y=(y_1,y_2)\in \otimes^2 {T}$ are called fundamental objects.
Define an operation on fundamental objects by
\begin{eqnarray}\label{eq:fundamental}
X\circ Y=([x_1,x_2,y_1],y_2)+(y_1,[x_1,x_2,y_2]).
\end{eqnarray}
It is easy to prove that $L$ is a Leibniz algebra.  By condition (LY6),  we define $\ad^L(x_1,x_2)(w)=[x_1,x_2,w]$, then one get the following equality
\begin{eqnarray}
\ad^L(X)\ad^L(Y) (w)-\ad^L(Y)\ad^L(X) (w)=\ad^L(X\circ Y) (w), 
\end{eqnarray}
for all $X,Y\in L, w\in {T}$. Thus $\ad^L: L \to \gl({T})$ is a homomorphism of Leibniz algebras.

\begin{defi}[\cite{Yam1}] Let $T$ be a Lie triple algebra and $V$ be a vector space. Then $(V, \rho, D, \theta)$ is called a representation
of $T$ if and only if the following conditions are satisfied,
\begin{itemize}
\item[$\bullet$]{\rm(R31)}\quad $\DD(x_1,x_2)-\theta(x_2,x_1)+\theta(x_1,x_2)+\rho([x_1,x_2])-[\rho (x_1),\rho (x_2)]=0$;
\item[$\bullet$]{\rm(R41)}\quad $\DD([x_1,x_2],x_3)+\DD([x_2,x_3],x_1)+\DD([x_3,x_1],x_2)=0$;
\item[$\bullet$]{\rm(R42)}\quad $\theta([x_1,x_2],x_3)=\theta(x_1,x_3)\rho(x_2)-\theta(x_2,x_3)\rho(x_1)$;
\item[$\bullet$]{\rm(R51)}\quad $[\DD(x_1,x_2),\rho(y_2)]=\rho([x_1, x_2, y_2])$;
\item[$\bullet$]{\rm(R52)}\quad $\theta(x_1,[y_1, y_2])=\rho(y_1)\theta(x_1, y_2)-\rho(y_2)\theta(x_1,y_1)$;
\item[$\bullet$]{\rm(R61)}\quad $[\DD(x_1,x_2),\theta(y_1,y_2)]=\theta((x_1,x_2)\circ (y_1,y_2))$;
\item[$\bullet$]{\rm(R62)}\quad $\theta(x_1,[y_1, y_2, y_3])= \theta (y_2, y_3)\theta(x_1,y_1) - \theta (y_1, y_3)\theta(x_1,y_2) + \DD (y_1, y_2)\theta(x_1,y_3)$,
\end{itemize}
where $\rho$ is a map from $T$ to $\End(V)$ and $D, \theta$ are maps from $T\times T$ to $\End(V)$.
\end{defi}


For example, given a Lie triple algebra $T$, there is a natural {\bf adjoint representation} on itself.
The corresponding representation maps $D(x_1,x_2)$, $\theta(x_1,x_2)$, $\rho(x_1)$  are given by
\begin{eqnarray*}
D(x_1,x_2)(x_3)=[x_1,x_2,x_3],\quad \theta(x_1,x_2)(x_3)=[x_3,x_1,x_2],\quad \rho(x_1)(x_2)=[x_1,x_2].
\end{eqnarray*}

Next we revisit  the Yamaguti's cohomology for Lie triple algebras.

Let $V$ be a representation of Lie triple algebra $T$. We are going to define the cohomology groups of $T$ with coefficients in $V$.
Let $f:T\times\cdots \times T\to V$ be multilinear maps of $T$ into $V$ such that the following conditions are satisfied:
\begin{eqnarray}
 f(x_1,\cdots,x_{2i-1},x_{2i}\cdots,x_n)&=&0,\ \  \mbox{if}\ \  x_{2i-1}=x_{2i}, \forall i=1,2,\cdots,\lfloor n/2 \rfloor.
\end{eqnarray}
The vector space spanned by such multilinear maps is called an cochain of $T$, which is denoted by $C^n(T,V)$ for $n\geq 1$.

\begin{defi}[\cite{Yam1}]\label{def:cohomology}
Let $n\geq 1$, $(f,g)\in C^{2n}(T, V)\times C^{2n+1}(T, V)$.
The coboundary operator $\delta: (\widetilde{f}, g)\mapsto (\delta_{\textrm{I}}\widetilde{f}, \delta_{\textrm{II}}g)$
is a mapping from $C^{2n}(T, V)\times C^{2n+1}(T, V)$ into $C^{2n+2}(T, V)\times C^{2n+3}(T, V)$ defined as follows:
\begin{eqnarray*}
 &&(\delta_{\textrm{I}}\widetilde{f})(x_{1}, x_{2}, \cdots, x_{2n+2})\\
&=&\rho(x_{2n+1})g(x_{1}, \cdots, x_{2n}, x_{2n+2}))-\rho(x_{2n+2})g(x_{1}, \cdots, x_{2n+1})\\
&&-g(x_{1}, \cdots, x_{2n}, [x_{2n+1},x_{2n+2}])\\
&&+\sum\limits_{k=1}^{n}(-1)^{n+k+1}D(x_{2k-1}, x_{2k})\widetilde{f}(x_{1}, \cdots, \hat{x}_{2k-1}, \hat{x}_{2k}, \cdots, x_{2n+2})\\
&&+\sum\limits_{k=1}^{n}\sum\limits_{j=2k+1}^{2n+2}(-1)^{n+k}\widetilde{f}(x_{1}, \cdots, \hat{x}_{2k-1}, \hat{x}_{2k}, \cdots, [x_{2k-1}, x_{2k}, x_{j}], \cdots, x_{2n+2}),
\end{eqnarray*}
\begin{eqnarray*}
 &&(\delta_{\textrm{II}}g)(x_{1}, x_{2}, \cdots, x_{2n+3})\\
&=& \theta(x_{2n+2},  x_{2n+3})g(x_{1},\cdots, x_{2n+1})\\
&&-\theta( x_{2n+1},  x_{2n+3})g(x_{1}, \cdots, x_{2n}, x_{2n+2})\\
&&+\sum\limits_{k=1}^{n+1}(-1)^{n+k+1}D( x_{2k-1},  x_{2k})g(x_{1}, \cdots, \hat{x}_{2k-1}, \hat{x}_{2k}, \cdots, x_{2n+3})\\
&&+\sum\limits_{k=1}^{n+1}\sum\limits_{j=2k+1}^{2n+3}(-1)^{n+k}g(x_{1}, \cdots, \hat{x}_{2k-1}, \hat{x}_{2k}, \cdots, [x_{2k-1}, x_{2k}, x_{j}], \cdots,x_{2n+3}).
\end{eqnarray*}
\end{defi}

The coboundary operator defined above satisfies
$\delta\circ \delta=0$, that is, $\delta_{\textrm{I}} \circ\delta_{\textrm{I}} =0$ and $\delta_{\textrm{II}} \circ\delta_{\textrm{II}} =0.$
Let $Z^{(2n,2n+1)}(T, V)$ be the subspace of $C^{2n}(T, V)\times C^{2n+1}(T, V)$ spanned by $(\widetilde{f}, g)$ such that $\delta(\widetilde{f}, g)=0$
which is called the space of cocycles and $B^{(2n,2n+1)}(T, V)=\delta(C^{2n-2}(T, V)\times C^{2n-1}(T, V))$ which is called the space of coboundaries.

\begin{defi}[\cite{Yam1}]\label{def:1cohmologygroup} For the case $n\geq 2$,
the $(2n, 2n+1)$-cohomology group  of a Lie triple algebra $T$ with coefficients in $V$ is defined to be the quotient space:
$$H^{(2n, 2n+1)}(T, V)\triangleq(Z^{(2n,2n+1)}(T, V))/(B^{(2n,2n+1)}(T, V)).$$
\end{defi}

For $n=1$, the $(2,3)$-cohomology group is defined as follows.
Let $C^2(T,V)$ be the space of maps $\nu:  T\times T\to V$ such that
\begin{eqnarray}\label{eq:skewsymm01}
\nu(x_1,x_2)+\nu(x_2,x_1)=0,
\end{eqnarray}
and $C^3(T,V)$ the space of maps $\omega:  T\times T\times T\to V$ such that
\begin{eqnarray}\label{eq:skewsymm02}
\omega(x_1, x_2, x_3)+\omega(x_2, x_1, x_3)=0.
\end{eqnarray}
The coboundary operator
$$\delta: C^2(T,V)\times C^3(T,V)\to C^4(T,V)\times C^5(T,V)\quad (\nu,\omega)\mapsto(\delta_{\textrm{I}}\nu, \delta_{\textrm{II}}\omega)$$
is defined by
\begin{eqnarray}
\delta_{\textrm{I}}\nu(x_1,x_2, y_1, y_2)&=&\omega( x_1, x_2,[y_1, y_2])+\rho(y_2)\omega(x_1,x_2,y_1)\notag\\
&&-\rho(y_1)\omega(x_1,x_2,y_2)+\DD(x_1, x_2)\nu(y_1, y_2)\notag\\
&&\label{3coboundary01}-\nu([x_1, x_2, y_1], y_2) - \nu(y_1,[x_1, x_2,y_2])),\\
\delta_{\textrm{II}}\omega(x_1,x_2, y_1, y_2, y_3)&=&
\omega( x_1, x_2,[y_1, y_2, y_3])-\omega([x_1, x_2, y_1], y_2, y_3)\notag\\
&& - \omega(y_1, [x_1, x_2,y_2], y_3) - \omega(y_1, y_2, [x_1, x_2,y_3])\notag\\
&&+\DD(x_1, x_2)\omega(y_1, y_2, y_3)-\theta(y_2, y_3)\omega(x_1,x_2,y_1)\notag\\
&&\label{3coboundary02}+\theta(y_1, y_3)\omega(x_1,x_2,y_2)- \DD(y_1, y_2)\omega(x_1,x_2,y_3).
\end{eqnarray}

The coboundary operator
$$\delta^*: C^2(T,V)\times C^3(T,V)\to C^3(T,V)\times C^4(T,V),\quad (\nu,\omega)\mapsto(\delta^*_{\textrm{I}}\nu, \delta^*_{\textrm{II}}\omega)$$
is defined by
  \begin{eqnarray}
\delta^*_{\textrm{I}}\nu(x_1,x_2, x_3)&=&\omega(x_1,x_2, x_3)+c.p.+\rho(x_1)\nu(x_2,x_3)+c.p.\notag\\
&&\label{3coboundary03}+\nu([x_1,x_2],x_3)+c.p.,\\
\label{3coboundary04}\delta^*_{\textrm{II}}\omega(x_1,x_2, x_3,y_1)
&=&\theta(x_1, y_1)\nu(x_2,x_3)+c.p.+\omega([x_1,x_2], x_3, y_1)+c.p.
\end{eqnarray}

Thus we obtain a cobourdary operator
\begin{eqnarray*}
\Delta_2=(\delta^*_{\textrm{I}},\delta^*_{\textrm{II}},\delta_{\textrm{I}},\delta_{\textrm{II}}):
&&C^{(2,3)}(T,V)\longrightarrow C^{(3,4)}(T,V)\times C^{(4,5)}(T,V),\\
 &&(\nu,\omega)\mapsto(\delta^*_{\textrm{I}}\nu, \delta^*_{\textrm{II}}\omega, \delta_{\textrm{I}}\nu, \delta_{\textrm{II}}\omega),
\end{eqnarray*}
where we denote $C^{(3,4)}(T,V):= C^3(T,V)\times C^4(T,V)$ and $C^{(4,5)}(T,V):=C^4(T,V)\times C^5(T,V)$.

The (2,3)-cohomology group  of a Lie triple algebra $T$ with coefficients in $V$ is defined to be the quotient space
$$H^{(2,3)}(T, V))\triangleq Z^{(2,3)}(T, V)/B^{(2,3)}(T, V).$$

Finally, we construct a new $(3,4,4,5)$-cohomlogy theory for Lie triple algebras. We extend the Yamaguti's cohomology spaces to the following:
\begin{eqnarray*}
\Delta_3=(\delta^*_{\textrm{I}},\delta^*_{\textrm{II}},\delta_{\textrm{I}},\delta_{\textrm{II}}):
 &&C^{(3,4)}(T,V)\times C^{(4,5)}(T,V)\longrightarrow C^{(5,6)}(T,V)\times C^{(6,7)}(T,V),\\
&&(l_3,\widehat{l}_4, \widetilde{l}_4,l_5)\mapsto (\delta^*_{\textrm{I}}(l_3), \delta^*_{\textrm{II}}(\widehat{l}_4), \delta_{\textrm{I}}(\widetilde{l}_4), \delta_{\textrm{II}}(l_5)).
\end{eqnarray*}
The cobourdary operator  $\Delta_3=(\delta^*_{\textrm{I}},\delta^*_{\textrm{II}},\delta_{\textrm{I}},\delta_{\textrm{II}})$ is defined  as follows:
\begin{align}\label{3cocycle01}
{\delta^*_{\textrm{I}}(l_3)(x_1,x_2,y_1,y_2,y_3)}=&D(x_1,x_2)l_3(y_1,y_2,y_3)-l_3([x_1,x_2,y_1],y_2,y_3)\notag\\
&-l_3(y_1,[x_1,x_2,y_2],y_3)-l_3(y_1,y_2,[x_1,x_2,y_3])\notag\\
&+\widetilde{l}_4(x_1,x_2,[y_1,y_2],y_3)+\widetilde{l}_4(x_1,x_2,y_2,[y_1,y_3])\notag\\
&-\widetilde{l}_4(x_1,x_2,y_1,[y_2,y_3])-\rho(y_1)\widetilde{l}_4(x_1,x_2,y_2,y_3)\notag\\
&+\rho(y_2)\widetilde{l}_4(x_1,x_2,y_1,y_3)-\rho(y_3)\widetilde{l}_4(x_1,x_2,y_1,y_2)\notag\\
&-l_5(x_1,x_2,y_1,y_2,y_3)-l_5(x_1,x_2,y_2,y_3,y_1)\notag\\
&-l_5(x_1,x_2,y_3,y_1,y_2),
\end{align}
\begin{align}\label{3cocycle02}
{\delta^*_{\textrm{II}}(\widehat{l}_4)(x_1,x_2,y_1,y_2,y_3,z_1)}=
&D(x_1,x_2)\widehat{l}_4(y_1,y_2,y_3,z_1)\notag\\
&-\widehat{l}_4([X,y_1],y_2,y_3,z_1)-\widehat{l}_4(y_1,[X,y_2],y_3,z_1)\notag\\
&-\widehat{l}_4(y_1,y_2,[X,y_3],z_1)-\widehat{l}_4(y_1,y_2,y_3,[X,z_1])\notag\\
&+\theta(y_1,z_1)\widetilde{l}_4(x_1,x_2,y_2,y_3)+\theta(y_3,z_1)\widetilde{l}_4(x_1,x_2,y_1,y_2)\notag\\
&-\theta(y_2,z_1)\widetilde{l}_4(x_1,x_2,y_1,y_3)+l_5(X,[y_1,y_2],y_3,z_1)\notag\\
&+l_5(X,y_2,[y_1,y_3],z_1)-l_5(X,y_1,[y_2,y_3],z_1),
\end{align}
\begin{align}\label{3cocycle03}
{\delta_{\textrm{I}}(\widetilde{l}_4)(x_1,x_2,y_1,y_2,z_1,z_2)}=
&D(x_1,x_2)\widetilde{l}_4(Y,z_1,z_2)-D(y_1,y_2)\widetilde{l}_4(X,z_1,z_2)\notag\\
&+\rho(z_1)l_5(X,Y,z_2)-\rho(z_2)l_5(X,Y,z_1)-l_5(X,Y,[z_1,z_2])\notag\\
&-\widetilde{l}_4([X,y_1],y_2,z_1,z_2)-\widetilde{l}_4(y_1, [X,y_2],z_1,z_2)\notag\\
&-\widetilde{l}_4(Y,[X,z_1],z_2)-\widetilde{l}_4(Y,z_1,[X,z_2])\notag\\
&+\widetilde{l}_4(X, [Y,z_1],z_2)+\widetilde{l}_4(X,z_1,[Y,z_2]),
\end{align}
\begin{align}\label{3cocycle04}
{\delta_{\textrm{II}}(l_5)(x_1,x_2,y_1,y_2,z_1,z_2,z_3)}=
&\theta(z_2,z_3)l_5(X,Y,z_1)+\theta(z_1,z_3)l_5(X,Y,z_2)\notag\\
&+D(x_1,x_2)l_5(Y,z_1,z_2,z_3)-D(y_1,y_2)l_5(X,z_1,z_2,z_3)\notag\\
&+D(z_1,z_2)l_5(X,Y,z_3)-l_5([X,y_1],y_2,z_1,z_2,z_3)\notag\\
&-l_5(y_1,[X,y_2],z_1,z_2,z_3)-l_5(Y,[X,z_1],z_2,z_3)\notag\\
&-l_5(Y,z_1,[X,z_2],z_3)-l_5(Y,z_1,z_2,[X,z_3])\notag\\
&+l_5(X,[Y,z_1],z_2,z_3)+l_5(X,z_1,[Y,z_2],z_3)\notag\\
&+l_5(X,z_1,z_2,[Y,z_3])-l_5(X,Y,[z_1,z_2,z_3]),
\end{align}
where $X=(x_1,x_2)$, $Y=(y_1,y_2)$, $l_3\in C^{3}(T,V),\widehat{l}_4\in C^{4}(T,V), \widetilde{l}_4\in C^{4}(T,V)$ and $l_5\in C^{5}(T,V)$.
Note that the above cobourdary operators  $\delta_{\textrm{I}}(\widetilde{l}_4)$ and $\delta_{\textrm{II}}(l_5)$ are Yamaguti's cohomology for the case $n=2$, but the cobourdary operators  $\delta^*_{\textrm{I}}(l_3)$ and $\delta^*_{\textrm{II}}(\widehat{l}_4)$ are absolutely new.

Thus we obtain the following cochain complex
\begin{eqnarray*}
 C^{(2,3)}(T,V)&\xrightarrow[]{\Delta_2=(\delta^*,\delta)}&C^{(3,4)}(T,V)\times C^{(4,5)}(T,V)
\xrightarrow[]{\Delta_3=(\delta^*,\delta)} C^{(5,6)}(T,V)\times C^{(6,7)}(T,V).
\end{eqnarray*}
The cohomology group in the middle space of  this cochain complex are to be defined as follows.

\begin{defi}
Let $T$ be a Lie triple algebra and $(V, \rho, D, \theta)$ a representation of $T$.
Then $(l_3,\widehat{l}_4, \widetilde{l}_4,l_5)\in C^{(3,4,4,5)}(T,V)$ is called a (3,4,4,5)-cocycle if it is contained in the kernel of $\Delta_3$, i.e.,
${\Delta_3}(l_3,\widehat{l}_4, \widetilde{l}_4,l_5)=(\delta^*_{\textrm{I}}(l_3), \delta^*_{\textrm{II}}(\widehat{l}_4), \delta_{\textrm{I}}(\widetilde{l}_4), \delta_{\textrm{II}}(l_5))=0$.
The space of (3,4,4,5)-cocycles is denoted by $Z^{(3,4,4,5)}(T, V)$.

For $(l_3,\widehat{l}_4, \widetilde{l}_4,l_5)\in C^{(3,4,4,5)}(T,V)$, it is called a (3,4,4,5)-coboundary if there exists  $(\nu,\omega)\in C^2(T,V)\times C^3(T,V)$  such that  $(l_3,\widehat{l}_4, \widetilde{l}_4,l_5)={\Delta_2}(\nu,\omega)=(\delta^*_{\textrm{I}}\nu, \delta^*_{\textrm{II}}\omega, \delta_{\textrm{I}}\nu, \delta_{\textrm{II}}\omega)$.
The space of (3,4,4,5)-coboundaries is denoted by $B^{(3,4,4,5)}(T, V)$.
\end{defi}

\begin{thm}\label{mainthm1} With notations above, we have
$$\Delta_3\circ \Delta_2=0.$$
Thus the space of (3,4,4,5)-coboundaries is contained in space of (3,4,4,5)-cocycles.
\end{thm}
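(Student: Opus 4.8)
The plan is to prove $\Delta_3\circ\Delta_2=0$ by splitting it into its four component identities, matching the four slots of the target $C^{(5,6)}(T,V)\times C^{(6,7)}(T,V)$. Given $(\nu,\omega)\in C^{(2,3)}(T,V)$, write $(l_3,\widehat l_4,\widetilde l_4,l_5)=\Delta_2(\nu,\omega)=(\delta^*_{\textrm I}\nu,\delta^*_{\textrm{II}}\omega,\delta_{\textrm I}\nu,\delta_{\textrm{II}}\omega)$. Then $\Delta_3\circ\Delta_2=0$ amounts to the four equations
\[
\delta^*_{\textrm I}(\delta^*_{\textrm I}\nu)=0,\qquad
\delta^*_{\textrm{II}}(\delta^*_{\textrm{II}}\omega)=0,\qquad
\delta_{\textrm I}(\delta_{\textrm I}\nu)=0,\qquad
\delta_{\textrm{II}}(\delta_{\textrm{II}}\omega)=0,
\]
where in the first two equations $\delta^*_{\textrm I}$, $\delta^*_{\textrm{II}}$ at the outer stage act with the formulas \eqref{3cocycle01}, \eqref{3cocycle02}, taking as input the quadruple whose relevant components are $l_3=\delta^*_{\textrm I}\nu$, $\widehat l_4=\delta^*_{\textrm{II}}\omega$, $\widetilde l_4=\delta_{\textrm I}\nu$, $l_5=\delta_{\textrm{II}}\omega$. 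The last two equations are exactly $\delta_{\textrm I}\circ\delta_{\textrm I}=0$ and $\delta_{\textrm{II}}\circ\delta_{\textrm{II}}=0$, which are the Yamaguti identity $\delta\circ\delta=0$ already recorded after Definition \ref{def:cohomology} (for $n=1$ composed into $n=2$), so nothing new is needed there.

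For the first component, I would substitute $l_3=\delta^*_{\textrm I}\nu$, $\widetilde l_4=\delta_{\textrm I}\nu$, $l_5=\delta_{\textrm{II}}\omega$ from \eqref{3coboundary01}--\eqref{3coboundary04} into the right-hand side of \eqref{3cocycle01} evaluated at $(x_1,x_2,y_1,y_2,y_3)$. The resulting expression is a sum of terms each of the form (representation operator applied to $\nu$ or $\omega$) evaluated on iterated brackets of the $x_i,y_j$. I would organize the cancellation by the type of term: terms in $\nu$ alone, terms in $\omega$ alone, and mixed terms $D(\cdot,\cdot)\nu(\cdot,\cdot)$ versus $\rho(\cdot)\omega(\cdot,\cdot,\cdot)$ etc. The vanishing then follows from a bookkeeping argument using the representation axioms (R31)--(R62) together with the Lie triple algebra axioms (LY1)--(LY6) — in effect, the same algebraic relations that force $\delta^*\circ\Delta$-type composites to vanish in the Leibniz/Lie-triple setting. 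The second component $\delta^*_{\textrm{II}}(\delta^*_{\textrm{II}}\omega)=0$ is handled the same way, using \eqref{3cocycle02} with $\widehat l_4=\delta^*_{\textrm{II}}\omega$, $\widetilde l_4=\delta_{\textrm I}\nu$, $l_5=\delta_{\textrm{II}}\omega$, and invoking (R42), (R52), (R61), (R62) for the $\theta$-terms.

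The main obstacle is purely combinatorial: the identities \eqref{3cocycle01}--\eqref{3cocycle04} mix several cochains of different arities, and after substitution one gets dozens of terms that must be paired off, many only after rewriting nested brackets via (LY3)--(LY6). I would manage this by exploiting symmetry — in \eqref{3cocycle01} the cyclic structure in $y_1,y_2,y_3$ groups the $l_5$-terms and the $\widetilde l_4$-bracket terms into cyclic orbits, and in \eqref{3cocycle03}--\eqref{3cocycle04} the antisymmetry in $X\leftrightarrow$ versus $Y$-slots lets one reduce to checking a single representative family. A useful structural shortcut: the operators $\delta^*$ are precisely the differentials coming from the Leibniz algebra structure on $L=\otimes^2 T$ together with its action, so once the Yamaguti part ($\delta\circ\delta=0$) is granted, the cross terms $\delta^*\delta=\delta\delta^*$-type compatibilities can be read off from the fact that $\ad^L:L\to\gl(T)$ is a Leibniz homomorphism and that $D,\theta,\rho$ assemble into a representation of that Leibniz algebra; this is what ultimately guarantees the four composites vanish simultaneously.
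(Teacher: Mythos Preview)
Your proposal is correct and follows essentially the same approach as the paper: split $\Delta_3\circ\Delta_2=0$ into its four components, dispose of the last two via Yamaguti's $\delta\circ\delta=0$, and verify the two new ones $\delta^*_{\textrm I}(\delta^*_{\textrm I}\nu)=0$ and $\delta^*_{\textrm{II}}(\delta^*_{\textrm{II}}\omega)=0$ by direct substitution of \eqref{3coboundary01}--\eqref{3coboundary04} into \eqref{3cocycle01}--\eqref{3cocycle02} and term-by-term cancellation. The paper carries out those two computations explicitly and isolates the needed representation axioms as (R31) and (R51) for the first identity and (R41), (R42), (R61) for the second, so your list of axioms can be pruned and your ``structural shortcut'' via the Leibniz algebra on $L$ is not actually invoked.
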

\begin{proof}
Assume $l_3=\delta^*_{\textrm{I}}\nu, \widehat{l}_4=\delta^*_{\textrm{II}}\omega, \widetilde{l}_4=\delta_{\textrm{I}}\nu, l_5=\delta_{\textrm{II}}\omega$ as defined in \eqref{3coboundary01}--\eqref{3coboundary04}. By Yamaguti's cohomology,
$\delta_{\textrm{I}}(\widetilde{l}_4)=\delta_{\textrm{I}}(\delta_{\textrm{I}}\nu)=0, \delta_{\textrm{II}}(l_5)=\delta_{\textrm{II}}(\delta_{\textrm{II}}\omega)=0$.
Now we verify that $\delta^*_{\textrm{I}}(\delta^*_{\textrm{I}}\nu)=0$ and  $\delta^*_{\textrm{II}}(\delta^*_{\textrm{II}}\omega)=0$.
By definition, we have
\begin{eqnarray*}
&&\delta^*_{\textrm{I}}(\delta^*_{\textrm{I}}\nu)(x_1,x_2,y_1,y_2,y_3)\\
&=&D(x_1,x_2)\Big(\omega(y_1,y_2, y_3)+c.p.+\rho(y_1)\nu(y_2,y_3)+c.p.+\nu([y_1,y_2],y_3)+c.p.\Big)\\
&&-\Big(\omega([x_1,x_2,y_1],y_2, y_3)+c.p.+\rho([x_1,x_2,y_1])\nu(y_2,y_3)+c.p.\\
&&+\nu([[x_1,x_2,y_1],y_2],y_3)+c.p.\Big)\\
&&-\Big(\omega(y_1,[x_1,x_2,y_2], y_3)+c.p.+\rho(y_1)\nu([x_1,x_2,y_2],y_3)+c.p.\\
&&+\nu([y_1,[x_1,x_2,y_2]],y_3)+c.p.\Big)\\
&&-\Big(\omega(y_1,y_2, [x_1,x_2,y_3])+c.p.+\rho(y_1)\nu(y_2,[x_1,x_2,y_3])+c.p.\\
&&+\nu([y_1,y_2],[x_1,x_2,y_3])+c.p.\Big)\\
&&+\Big(\omega( x_1, x_2,[[y_1,y_2], y_3])+\rho(y_3)\omega(x_1,x_2,[y_1,y_2])-\rho([y_1,y_2])\omega(x_1,x_2,y_3)\\
&&\quad+\DD(x_1, x_2)\nu([y_1,y_2], y_3)-\nu([x_1, x_2, [y_1,y_2]], y_3) - \nu([y_1,y_2],[x_1, x_2,y_3])\Big)\\
&&+\Big(\omega( x_1, x_2,[y_2, [y_1,y_3]])+\rho([y_1,y_3])\omega(x_1,x_2,y_2)-\rho(y_2)\omega(x_1,x_2,[y_1,y_3])\\
&&\quad+\DD(x_1, x_2)\nu(y_2, [y_1,y_3])-\nu([x_1, x_2, y_2], [y_1,y_3]) - \nu(y_2,[x_1, x_2,[y_1,y_3]])\Big)\\
&&-\Big(\omega( x_1, x_2,[y_1, [y_2,y_3]])+\rho([y_2,y_3])\omega(x_1,x_2,y_1)-\rho(y_1)\omega(x_1,x_2,[y_2,y_3])\\
&&\quad+\DD(x_1, x_2)\nu(y_1, [y_2,y_3])-\nu([x_1, x_2, y_1], [y_2,y_3]) - \nu(y_1,[x_1, x_2,[y_2,y_3]])\Big)\\
&&-\rho(y_1)\Big(\omega( x_1, x_2,[y_2, y_3])+\rho(y_3)\omega(x_1,x_2,y_2)-\rho(y_2)\omega(x_1,x_2,y_3)\\
&&\quad+\DD(x_1, x_2)\nu(y_2, y_3)-\nu([x_1, x_2, y_2], y_3) - \nu(y_2,[x_1, x_2,y_3])\Big)\\
&&+\rho(y_2)\Big(\omega( x_1, x_2,[y_1, y_3])+\rho(y_3)\omega(x_1,x_2,y_1)-\rho(y_1)\omega(x_1,x_2,y_3)\\
&&\quad+\DD(x_1, x_2)\nu(y_1, y_3)-\nu([x_1, x_2, y_1], y_3) - \nu(y_1,[x_1, x_2,y_3])\Big)\\
&&-\rho(y_3)\Big(\omega( x_1, x_2,[y_1, y_2])+\rho(y_2)\omega(x_1,x_2,y_1)-\rho(y_1)\omega(x_1,x_2,y_2)\\
&&\quad+\DD(x_1, x_2)\nu(y_1, y_2)-\nu([x_1, x_2, y_1], y_2) - \nu(y_1,[x_1, x_2,y_2])\Big)\\
&=&\Big(\omega( x_1, x_2,[y_1, y_2, y_3])-\omega([x_1, x_2, y_1], y_2, y_3)\notag\\
&& - \omega(y_1, [x_1, x_2,y_2], y_3) - \omega(y_1, y_2, [x_1, x_2,y_3])\notag\\
&&+\DD(x_1, x_2)\omega(y_1, y_2, y_3)-\theta(y_2, y_3)\omega(x_1,x_2,y_1)\notag\\
&&\label{3coboundary02}+\theta(y_1, y_3)\omega(x_1,x_2,y_2)- \DD(y_1, y_2)\omega(x_1,x_2,y_3)\Big)+c.p.\\
&&-l_5(x_1,x_2,y_1,y_2,y_3)-c.p.\\
&=&0,
\end{eqnarray*}
where in the second equality we use conditions (R31) and (R51).
Similarly, we get
\begin{eqnarray*}
&&\delta^*_{\textrm{II}}(\delta^*_{\textrm{II}}\omega)(x_1,x_2,y_1,y_2,y_3,z_1)\\
&=&D(x_1,x_2)\Big(\theta(y_1, z_1)\nu(y_2,y_3)+c.p.+\omega([y_1,y_2], y_3, z_1)+c.p.\Big)\\
&&-\Big(\theta([X,y_1], z_1)\nu(y_2,y_3)+c.p.+\omega([[X,y_1],y_2], y_3, z_1)+c.p.\Big)\\
&&-\Big(\theta(y_1, z_1)\nu([X,y_2],y_3)+c.p.+\omega([y_1,[X,y_2]], y_3, z_1)+c.p.\Big)\\
&&-\Big(\theta(y_1, z_1)\nu(y_2,[X,y_3])+c.p.+\omega([y_1,y_2], [X,y_3], z_1)+c.p.\Big)\\
&&-\Big(\theta(y_1, [X,z_1])\nu(y_2,y_3)+c.p.+\omega([y_1,y_2], y_3, [X,z_1])+c.p.\Big)\\
&&+\theta(y_1,z_1)\Big(\omega( x_1, x_2,[y_2, y_3])+\rho(y_3)\omega(x_1,x_2,y_2)-\rho(y_2)\omega(x_1,x_2,y_3)\\
&&\quad+\DD(x_1, x_2)\nu(y_2, y_3)-\nu([x_1, x_2, y_2], y_3) - \nu(y_2,[x_1, x_2,y_3])\Big)\\
&&+\theta(y_3,z_1)\Big(\omega( x_1, x_2,[y_1, y_2])+\rho(y_2)\omega(x_1,x_2,y_1)-\rho(y_1)\omega(x_1,x_2,y_2)\\
&&\quad+\DD(x_1, x_2)\nu(y_1, y_2)-\nu([x_1, x_2, y_1], y_2) - \nu(y_1,[x_1, x_2,y_2])\Big)\\
&&-\theta(y_2,z_1)\Big(\omega( x_1, x_2,[y_1, y_3])+\rho(y_3)\omega(x_1,x_2,y_1)-\rho(y_1)\omega(x_1,x_2,y_3)\\
&&\quad+\DD(x_1, x_2)\nu(y_1, y_3)-\nu([x_1, x_2, y_1], y_3) - \nu(y_1,[x_1, x_2,y_3])\Big)\\
&&-l_5(x_1,x_2,[y_1,y_2],y_3,z_1)-c.p.\\
&=&\Big(\omega( x_1, x_2,[[y_1,y_2], y_3, z_1])-\omega([x_1, x_2, [y_1,y_2]], y_3, z_1)\notag\\
&& - \omega([y_1,y_2], [x_1, x_2,y_3], z_1) - \omega([y_1,y_2], y_3, [x_1, x_2,z_1])\notag\\
&&+\DD(x_1, x_2)\omega([y_1,y_2], y_3, z_1)-\theta(y_3, z_1)\omega(x_1,x_2,[y_1,y_2])\notag\\
&&\label{3coboundary02}+\theta([y_1,y_2], z_1)\omega(x_1,x_2,y_3)- \DD([y_1,y_2], y_3)\omega(x_1,x_2,z_1)\Big)+c.p.\\
&&-l_5(x_1,x_2,[y_1,y_2],y_3,z_1)-c.p.\\
&=&0,
\end{eqnarray*}
where in the second equality we use conditions (R41),  (R42) and (R61).
Then we obtain $\delta^*_{\textrm{I}}(l_3)=\delta^*_{\textrm{I}}(\delta^*_{\textrm{I}}\nu)=0$ and  $\delta^*_{\textrm{II}}(\widehat{l}_4)=\delta^*_{\textrm{II}}(\delta^*_{\textrm{II}}\omega)=0$.
Thus the spaces of (3,4,4,5)-coboundaries  are contained in space of (3,4,4,5)-cocycles.
\end{proof}

\begin{defi}\label{def:1cohmologygroup}
The (3,4,4,5)-cohomology group  of a Lie triple algebra $T$ with coefficients in $V$ is defined to be the quotient space
$$H^{(3,4,4,5)}(T, V))\triangleq Z^{(3,4,4,5)}(T, V)/B^{(3,4,4,5)}(T, V).$$
\end{defi}

\section{$2$-term $L_\infty$-triple algebras}\label{sec:2term}

In this section, we introduce the notion of  2-term $L_\infty$-triple algebras.

\begin{defi}\label{lem:2term 3L}
A $2$-term $L_\infty$-triple algebra $\huaV=(V_1,V_0,\dM,l_3,\widehat{l}_4, \widetilde{l}_4,l_5),$ consists of the following data:
\begin{itemize}
\item[$\bullet$] a complex of vector spaces $V_1\stackrel{\dM}{\longrightarrow}V_0,$

\item[$\bullet$] a bilinear maps $[\cdot,\cdot]:V_i\times V_j\longrightarrow
V_{i+j}$, where $0\leq i+j\leq 1$,

\item[$\bullet$] a trilinear maps $[\cdot,\cdot,\cdot]:V_i\times V_j\times V_k\longrightarrow
V_{i+j+k}$, where $0\leq i+j+k\leq 1$,

\item[$\bullet$] a trilinear map $l_3:V_0\times V_0\times V_0\longrightarrow V_1$,

\item[$\bullet$] a pair of multilinear map $\widetilde{l}_4, \widehat{l}_4:V_0\times V_0\times V_0\times V_0\longrightarrow
V_1$,

\item[$\bullet$] a  multilinear map $l_5:V_0\times V_0\times V_0\times V_0\times V_0\longrightarrow
V_1$,
   \end{itemize}
   such that for any $x,y,x_i,y_i\in V_0$ and $u,v,u_i\in V_1$, the following equalities are satisfied:
\begin{itemize}
\item[$\rm(a)$] $ [x_1,x_2]=-[x_2,x_1],\quad [x, u]=- [u, x]$,
\item[] $ [x_1,x_2, x_3]=-[x_2,x_1,x_3],\quad  [x_1,x_2,u]=- [x_2,x_1,u],\quad  [u, x_1,x_2]=- [x_1,u,x_2]$,
\item[$\rm(b)$] $\dM [x, u]=[x,\dM u],\quad [\dM u,v]=[u,\dM v]$,
\item[$\rm(c)$] $\dM [x_1,x_2,u]=[x_1,x_2,\dM u],$\quad $\dM [u,x_1,x_2]=[\dM u,x_1,x_2],$
\item[$\rm(d)$] $[\dM u,v,x]=[u,\dM v,x]$,\quad $[x,\dM u,v]=[x,u,\dM v],$
\item[$\rm(e_1)$]
\begin{eqnarray*}
\dM l_3(x_1,x_2,x_3)&=&[[x_1, x_2],x_3]+c.p.+[x_1, x_2,x_3]+c.p.,\\
 l_3(x_1,x_2,\dM u_3)&=&[[x_1, x_2],u_3]+c.p.+[x_1, x_2,u_3]+c.p.,
\end{eqnarray*}
\item[$\rm(e_2)$]
\begin{eqnarray*}
\dM \widehat{l}_4(x_1, x_2,x_3,y_1)&=&[[x_1, x_2],x_3,y_1]+c.p.,\\
 \widehat{l}_4(x_1, x_2,x_3,\dM u_1)&=&[[x_1, x_2],x_3,u_1]+c.p.,
\end{eqnarray*}
\item[$\rm(e_3)$]
\begin{eqnarray*}
\dM \widetilde{l}_4(x_1, x_2, y_1, y_2)&=& [[ x_1, x_2, y_1], y_2] + [y_1, [ x_1, x_2, y_2]]-[x_1, x_2, [y_1, y_2]], \\
 \widetilde{l}_4(x_1, x_2, y_1, \dM u_2)&=&  [[ x_1, x_2, y_1], u_2] + [y_1, [ x_1, x_2, u_2]]-[x_1, x_2, [y_1, u_2]],\\
 \widetilde{l}_4(x_1,\dM u_2, y_1, y_2)&=& [[ x_1, u_2, y_1], y_2] + [y_1, [ x_1, u_2, y_2]]-[x_1, u_2, [y_1, y_2]] ,
\end{eqnarray*}
\item[$\rm(e_4)$]
\begin{eqnarray*}
\dM l_5(x_1, x_2,y_1,y_2,y_3)&=&[[x_1, x_2,y_1],y_2,y_3]+[y_1,[x_1, x_2,y_2],y_3]\\
&&+[y_1,y_2,[x_1, x_2,y_3]]-[x_1, x_2,[y_1,y_2,y_3]],\\
 l_5(x_1, x_2,y_1,y_2,\dM u_3)&=&[[x_1, x_2,y_1],y_2,u_3]+[y_1,[x_1, x_2,y_2],u_3]\\
&&+[y_1,y_2,[x_1, x_2,u_3]]-[x_1, x_2,[y_1,y_2,u_3]],\\
l_5(x_1,\dM u_2,y_1,y_2,y_3)&=&+[[x_1,u_2,y_1],y_2,y_3]+[y_1,[x_1,u_2,y_2],y_3]\\
&&+[y_1,y_2,[x_1,u_2,y_3]]-[x_1,u_2,[y_1,y_2,y_3]],
\end{eqnarray*}
\item[$\rm(f_1)$]
\begin{eqnarray}
&&[x_1,x_2,l_3(y_1,y_2,y_3)]+\widetilde{l}_4(x_1,x_2,[y_1,y_2],y_3)+\widetilde{l}_4(x_1,x_2,y_2,[y_1,y_3])\notag\\
&&+[\widetilde{l}_4(x_1,x_2,y_1,y_2),y_3])+[y_2,\widetilde{l}_4(x_1,x_2,y_1,y_3)])\notag\\
&&+l_5(x_1,x_2,y_1,y_2,y_3)+l_5(x_1,x_2,y_3,y_1,y_2)\notag\\
&=&\widetilde{l}_4(x_1,x_2,y_1,[y_2,y_3])+[y_1,\widetilde{l}_4(x_1,x_2,y_2,y_3)])\notag\\
&&+l_3([x_1,x_2,y_1],y_2,y_3)+l_3(y_1,[x_1,x_2,y_2],y_3)\notag\\
&&+l_3(y_1,y_2,[x_1,x_2,y_3])+l_5(x_1,x_2,y_3,y_2,y_1),
\end{eqnarray}
\item[$\rm(f_2)$]
\begin{eqnarray}
&&[x_1,x_2,\widehat{l}_4(y_1,y_2,y_3,z_1)]+l_5(X,[y_1,y_2],y_3,z_1)+l_5(X,y_2,[y_1,y_3],z_1)\notag\\
&&[\widetilde{l}_4(x_1,x_2,y_1,y_2),y_3,z_1]+[y_2,\widetilde{l}_4(x_1,x_2,y_1,y_3),z_1]\notag\\
&=&l_5(X,y_1,[y_2,y_3],z_1)+\widehat{l}_4([X,y_1],y_2,y_3,z_1)+[y_1,\widetilde{l}_4(x_1,x_2,y_2,y_3),z_1]\notag\\
&&+\widehat{l}_4(y_1,y_2,y_3,[X,z_1])+\widehat{l}_4(y_1,[X,y_2],y_3,z_1)+\widehat{l}_4(y_1,y_2,[X,y_3],z_1),
\end{eqnarray}
\item[$\rm(f_3)$]
\begin{eqnarray}
&&[X,\widetilde{l}_4(Y,z_1,z_2)]+\widetilde{l}_4(X, [Y,z_1],z_2)+\widetilde{l}_4(X,z_1,[Y,z_2])\notag\\
&&+[l_5(X,Y,z_1),z_2]+ [z_1,l_5(X,Y,z_2)]\notag\\
&=& l_5(X,Y,[z_1,z_2])+[Y,\widetilde{l}_4(X,z_1,z_2)]+\widetilde{l}_4(X\circ Y,z_1,z_2)\notag\\
&&+\widetilde{l}_4(Y,[X,z_1],z_2)+\widetilde{l}_4(Y,z_1,[X,z_2]),
\end{eqnarray}
\item[$\rm(f_4)$]
\begin{eqnarray}
&&[l_5(X,Y,z_1),z_2,z_3]+[z_1,l_5(X,Y,z_2),z_3]+[X,l_5(Y,z_1,z_2,z_3)]\notag\\
&&[z_1,z_2,l_5(X,Y,z_3)]+l_5(X,[Y,z_1],z_2,z_3)\notag\\
&&+l_5(X,z_1,[Y,z_2],z_3)+l_5(X,z_1,z_2,[Y,z_3])\notag\\
&=&[Y,l_5(X,z_1,z_2,z_3)]+l_5([X,y_1],y_2,z_1,z_2,z_3)+l_5(y_1,[X,y_2],z_1,z_2,z_3)\notag\\
&&+l_5(Y,[X,z_1],z_2,z_3)+l_5(Y,z_1,[X,z_2),z_3)\notag\\
&&+l_5(X,Y,[z_1,z_2,z_3])+l_5(Y,z_1,z_2,[X,z_3]),
\end{eqnarray}
where $X=(x_1,x_2)$ and $Y=(y_1,y_2)$.
   \end{itemize}
\end{defi}

\begin{defi}\label{homomorphism}
Let $\huaV=(V_1,V_0,\dM,l_3,\widehat{l}_4, \widetilde{l}_4,l_5)$ and $\huaV'=(V_1',V_0',\dM',l'_3,\widehat{l}'_4, \widetilde{l}'_4,l'_5)$ be two $2$-term $L_\infty$-triple algebras. A  homomorphism $\phi:\huaV \longrightarrow \huaV'$ consists of:
\begin{itemize}
\item[$\bullet$] a chain map $\phi:\huaV \longrightarrow \huaV'$, which consists of linear maps $\phi_0:V_0 \longrightarrow V_0'$ and $\phi_1:V_1 \longrightarrow V_1'$ such that $\phi_0\circ\dM=\dM' \circ\phi_1$;
\item[$\bullet$] two multilinear maps $\phi_2:V_0 \times V_0\longrightarrow V_1'$, $\phi_3:V_0 \times V_0 \times V_0 \longrightarrow V_1'$,
\end{itemize}
such that  for all $x_i\in V_0$ and $h\in V_1,$ we have
 \begin{eqnarray}
 \label{eq:homo01}\dM' (\phi_2(x_1,x_2))&=&\phi_0([x_1,x_2])-[\phi_0(x_1),\phi_0(x_2)]',\\
 \label{eq:homo02} \phi_2(x_1,\dM u)&=&\phi_1([x_1,u])-[\phi_0(x_1),\phi_1(u)]',\\
 \label{eq:homo03}\dM' (\phi_3(x_1,x_2,x_3))&=&\phi_0([x_1,x_2,x_3])-[\phi_0(x_1),\phi_0(x_2),\phi_0(x_3)]',\\
 \label{eq:homo04} \phi_3(x_1,x_2,\dM u)&=&\phi_1([x_1,x_2,u])-[\phi_0(x_1),\phi_0(x_2),\phi_1(u)]',
 \end{eqnarray}
       \begin{eqnarray}
\nonumber&&[\phi_2(x_1,x_2),\phi_0(x_3)]'+\phi_2([x_1,x_2],x_3)+\phi_1(l_3(x_1,x_2,x_3))\\
\nonumber&=&l_3^\prime(\phi_0(x_1),\phi_0(x_2),\phi_0(x_2))+[\phi_0(x_1),\phi_2(x_2,x_3)]'+[\phi_2(x_1,x_3),\phi_0(x_2)]'\\
\label{eq:morph11}&&+\phi_2(x_1,[x_2,x_3])+\phi_2([x_1,x_3],x_2),
      \end{eqnarray}
      and
    \begin{eqnarray}
    \nonumber&&[\phi_0(x_1),\phi_0(x_2),\phi_3(x_3,x_4,x_5)]'+\phi_3(x_1,x_2,[x_3,x_4,x_5])+\phi_1(l_5(x_1,x_2,x_3,x_4,x_5))\\
     \nonumber &=& l_5'(\phi_0(x_1),\phi_0(x_2),\phi_0(x_3),\phi_0(x_4),\phi_0(x_5))+l_3'(\phi_3(x_1,x_2,x_3),\phi_0(x_4),\phi_0(x_5))\\
    \nonumber &&+[\phi_0(x_3),\phi_3(x_1,x_2,x_4),\phi_0(x_5)]'+[\phi_0(x_3),\phi_0(x_4),\phi_3(x_1,x_2,x_5)]'\\
\label{eq:morph12} &&+\phi_3([x_1,x_2,x_3],x_4,x_5)+\phi_3(x_3,[x_1,x_2,x_4],x_5)+\phi_3(x_3,x_4,[x_1,x_2,x_5]).
     \end{eqnarray}
\end{defi}

Let $\varphi:\huaV \longrightarrow \huaV'$ and $\psi:\huaV' \longrightarrow \huaV''$ be two homomorphisms, their {\bf composition } $((\varphi\circ\psi)_0,(\varphi\circ\psi)_1,(\varphi\circ\psi)_2),(\varphi\circ\psi)_3)$ is given by $(\varphi\circ\psi)_0=\varphi_0\circ\psi_0$,  $(\varphi\circ\psi)_1=\varphi_1\circ\psi_1$,  $(\varphi\circ\psi)_2(x_1,x_2)=\psi_3(\varphi_0(x_1),\varphi_0(x_2))+\psi_1(\varphi_2(x_1,x_2))$ and
 $$(\varphi\circ\psi)_3(x_1,x_2,x_3)=\psi_3(\varphi_0(x_1),\varphi_0(x_2),\varphi_0(x_3))+\psi_1(\varphi_3(x_1,x_2,x_3)).$$
  The {identity homomorphism} $1_{\huaV}:\huaV\longrightarrow\huaV$ has the identity chain map as its underlying map, together with $(1_\huaV)_2=0$.

 Therefore we obtain  a  category {\rm \bf $2$Term-L$_\infty$} with $2$-term $L_\infty$-triple algebras as objects,  $2$-term $L_\infty$-triple algebras homomorphism as morphisms.

\section{  Lie triple 2-algebras}\label{sec:pre}

In this section, we define Lie triple 2-algebras, which are the categorification of Lie triple algebras, and show that the category of Lie triple 2-algebras is equivalent to the category of 2-term $L_\infty$-triple algebras.

Vector spaces can be categorified to $2$-vector spaces.
A $2$-vector space is a category in the category of vector spaces.
For the reference, see \cite{BC}.

Thus a $2$-vector space $C$ is a category with a vector space of
objects $C_0$ and a vector space of morphisms $C_1$, such that all
the structure maps are linear. Let $s,t:C_1\longrightarrow C_0$ be
the source and target maps respectively. Let $\cdot_\ve$ be the
composition of morphisms. Let $1:C_0\longrightarrow C_1$ be the unit map, i.e. for all $x\in C_0$, $1_x\in C_1$ is the identity morphism from $x$ to $x$.

It is well known that the category of 2-vector spaces is
equivalent to the category of 2-term complexes of vector spaces.
Roughly speaking, given a 2-vector space $C$, we have a 2-term complex of vector spaces
\begin{equation}\label{eq:complex}
\Ker(s)\stackrel{t}{\longrightarrow}C_0.
\end{equation}
Conversely, any 2-term complex of vector spaces
$\huaV:V_1\stackrel{\dM}{\longrightarrow}V_0$ gives rise to a
2-vector space of which the set of objects is $V_0$, the set of
morphisms is $V_0\oplus V_1$, the source map $s$ is given by
$s(v+m)=v$, and the target map $t$ is given by $t(v+m)=v+\dM m$,
where $v\in V_0,~m\in V_1.$ We denote the 2-vector space associated
to the 2-term complex of vector spaces
$\huaV:V_1\stackrel{\dM}{\longrightarrow}V_0$ by $\V$:
\begin{equation}\label{eqn:V}
\V=\begin{array}{c}
\V_1:=V_0\oplus V_1\\
\vcenter{\rlap{s }}~\Big\downarrow\Big\downarrow\vcenter{\rlap{t }}\\
\V_0:=V_0.
 \end{array}\end{equation}

\begin{defi}
A   Lie triple 2-algebra consists of:
\begin{itemize}
\item[$\bullet$] a $2$-vector spaces $L$;
\item[$\bullet$] a bilinear functor and a trilinear functor $[\cdot,\cdot]: L\times L\longrightarrow L$ and $[\cdot,\cdot,\cdot]: L\times L\times L\longrightarrow L$;
\item[$\bullet$] four multilinear natural isomorphism,
$$J_{x_1, x_2,x_3}:[x_1, [x_2,x_3]]+[x_3,x_2, x_1]\longrightarrow [[x_1, x_2],x_3]+ [x_2, [x_1,x_3]]+[x_1,x_2, x_3]+[x_3,x_1, x_2],$$
$$\widehat{J}_{x_1, x_2,x_3,y_1}: [x_1, [x_2,x_3],y_1]\longrightarrow [[x_1, x_2],x_3,y_1]+[x_2, [x_1,x_3],y_1],$$
$$\widetilde{J}_{x_1, x_2,y_1,y_2}: [x_1, x_2, [y_1, y_2]]\longrightarrow  [[x_1, x_2, y_1], y_2] + [y_1, [x_1, x_2, y_2]],$$
$J_{X,y_1,y_2,y_3}:[X, [y_1, y_2, y_3]] \longrightarrow  [[X, y_1], y_2, y_3] + [y_1, [X, y_2], y_3] + [y_1, y_2, [X, y_3]],$
\end{itemize}
such that for all $x_i,y_j,z_k\in L_0$, the following  identities hold:
\begin{itemize}
\item[$\rm(J_1)$]
\begin{align}
&[x_1,x_2,J_{y_1,y_2,y_3}](\widetilde{J}_{x_1,x_2,[y_1,y_2],y_3}+\widetilde{J}_{x_1,x_2,y_2,[y_1,y_3]}
+{J}_{X,y_1,y_2,y_3]}+{J}_{X,y_3,y_1,y_2]})\notag\\
&([\widetilde{J}_{x_1,x_2,y_1,y_2},y_3]+1+1+[y_2,\widetilde{J}_{x_1,x_2,y_1,y_3}])\notag\\
=&(\widetilde{J}_{x_1,x_2,y_1,[y_2,y_3]})(1+[y_1,\widetilde{J}_{x_1,x_2,y_2,y_3}]+{J}_{X,y_3,y_2,y_1]})\notag\\
&(J_{[x_1,x_2,y_1],y_2,y_3}+J_{y_1,[x_1,x_2,y_2],y_3}+J_{y_1,y_2,[x_1,x_2,y_3]}),
\end{align}
\item[$\rm(J_2)$]
\begin{align}
&[x_1,x_2,\widehat{J}_{y_1,y_2,y_3,z_1}](J_{X,[y_1,y_2],y_3,z_1}+J_{X,y_2,[y_1,y_3],z_1})\notag\\
&([\widetilde{J}_{x_1,x_2,y_1,y_2},y_3,z_1]+1+1+1+[y_2,\widetilde{J}_{x_1,x_2,y_1,y_3},z_1]+1)\notag\\
=&J_{X,y_1,[y_2,y_3],z_1}(\widehat{J}_{[X,y_1],y_2,y_3,z_1}+[y_1,\widetilde{J}_{x_1,x_2,y_2,y_3},z_1]+\widehat{J}_{y_1,y_2,y_3,[X,z_1]})\notag\\
&(1+1+\widehat{J}_{y_1,[x_1,x_2,y_2],y_3,z_1}+\widehat{J}_{y_1,y_2,[x_1,x_2,y_3],z_1}+1+1),
\end{align}
\item[$\rm(J_3)$]
\begin{align}
&[X,\widetilde{J}_{Y,z_1,z_2}] \Big(\widetilde{J}_{X, [Y,z_1],z_2}+\widetilde{J}_{X,z_1,[Y,z_2]}\Big) \Big([J_{X,Y,z_1},z_2]+ 1+1+[z_1,J_{X,Y,z_2}]\Big)\notag\\
=& J_{X,Y,[z_1,z_2]} \Big(1+[Y,\widetilde{J}_{X,z_1,z_2}]\Big) \Big({\widetilde{J}_{X\circ Y,z_1,z_2}+\widetilde{J}_{Y,[X,z_1],z_2}+\widetilde{J}_{Y,z_1,[X,z_2]}}\Big),
\end{align}
\item[$\rm(J_4)$]
\begin{align}
&[X,J_{Y,z_1,z_2,z_3}](J_{X,[Y,z_1],z_2,z_3}+J_{X,z_1,[Y,z_2],z_3}+J_{X,z_1,z_2,[Y,z_3]})\notag\\
&([J_{X,Y,z_1},z_2,z_3]+1+[z_1,J_{X,Y,z_2},z_3]+1+([z_1,z_2,J_{X,Y,z_3}])\notag\\
=&J_{X,Y,[z_1,z_2,z_3]}(1+1+[Y,J_{X,z_1,z_2,z_3}])(J_{[X,y_1],y_2,z_1,z_2,z_3}+J_{y_1,[X,y_2],z_1,z_2,z_3}\notag\\
&J_{Y,[X,z_1],z_2,z_3}+J_{Y,z_1,[X,z_2],z_3}+J_{Y,z_1,z_2,[X,z_3]}),
\end{align}
\end{itemize}
or, in terms of a commutative diagram,
$$\def\objectstyle{\scriptstyle}
  \def\labelstyle{\scriptstyle}
\xymatrix{
&[x_1,x_2,[y_1,[y_2,y_3]]+[y_3,y_2,y_1]]
\ar[dr]^{\widetilde{J}_{x_1,x_2,y_1,[y_2,y_3]}}\ar[dl]_{[x_1,x_2,J_{y_1,y_2,y_3}]}&\\
{\begin{aligned}&\scriptstyle[x_1,x_2,[[y_1,y_2],y_3]]+[x_1,x_2,[y_2,[y_1,y_3]]]\\[-.5em]
&\scriptstyle+ [x_1,x_2,[y_1,y_2,y_3]]+[x_1,x_2,[y_3,y_1,y_2]]
\end{aligned}}
\ar[dd]_{\widetilde{J}_{x_1,x_2,[y_1,y_2],y_3}}
\ar[dd]^{+\widetilde{J}_{x_1,x_2,y_2,[y_1,y_3]}+{J}_{X,y_1,y_2,y_3}+{J}_{X,y_3,y_1,y_2}}
&& {\begin{aligned}&\scriptstyle
[[x_1,x_2,y_1],[y_2,y_3]]+[y_1,[x_1,x_2,[y_2,y_3]]\\[-.5em]
&\scriptstyle+[x_1,x_2,[y_3,y_2,y_1]]
\end{aligned}}
\ar[dd]_{1+[y_1,\widetilde{J}_{x_1,x_2,y_2,y_3}]}\ar[dd]^{+{J}_{X,y_3,y_2,y_1}}\\
&&\\
{\begin{aligned}&\scriptstyle[[x_1,x_2,[y_1,y_2]],y_3]+ [[y_1,y_2,[x_1,x_2,y_3]]\\[-.5em]
&\scriptstyle+[[x_1,x_2,y_2],[y_1,y_3]]+[y_2,[x_1,x_2,[y_1,y_3]]]+\cdots
\end{aligned}}
\ar[dr]_{[\widetilde{J}_{x_1,x_2,y_1,y_2},y_3]+1+1+[y_2,\widetilde{J}_{x_1,x_2,y_1,y_3}]}&&
{\begin{aligned}&\scriptstyle [[x_1,x_2,y_1],[y_2,y_3]]+[y_1,[x_1,x_2,y_2],y_3]]\\[-.5em]
&\scriptstyle+[y_1,[y_2[x_1,x_2,y_3]]]+\cdots
\end{aligned}}
\ar[dl]^{J_{[x_1,x_2,y_1],y_2,y_3}+J_{y_1,[x_1,x_2,y_2],y_3}+J_{y_1,y_2,[x_1,x_2,y_3]}}\\
&R_1&}
\\ \\
$$
where $R_1$ is given by
$$
\begin{aligned}
R_1=&[[[x_1,x_2,y_1],y_2],y_3]+[[y_1, [x_1,x_2,y_2]],y_3]\\
&+ [y_1,y_2,[x_1,x_2,y_3]]+[[x_1,x_2,y_2],[y_1,y_3]]\\
&+[y_2,[[x_1,x_2,y_1],y_3]]+[y_2,[y_1,[x_1,x_2,y_3]]]\\
&+[[X,y_1], y_2,y_3]+[y_1,[X,y_2], y_3]\\
&+[y_1,y_2,[X,y_3]]+[[X,y_3], y_1,y_2]\\
&+[y_3,[X,y_1], y_2]+[y_3,y_1,[X,y_2]],
\end{aligned}
$$

$$\def\objectstyle{\scriptstyle}
  \def\labelstyle{\scriptstyle}
\xymatrix{
&[x_1,x_2,[y_1,[y_2,y_3],z_1]]\ar[dr]^{J_{X,y_1,[y_2,y_3],z_1}}\ar[dl]_{[x_1,x_2,\widehat{J}_{y_1,y_2,y_3,z_1}]}&\\
{\begin{aligned}&\scriptstyle [X,[[y_1,y_2],y_3,z_1]+[X,[y_2,[y_1,y_3],z_1]]\\[-.5em]
&\end{aligned}}
\ar[dd]_{J_{X,[y_1,y_2],y_3,z_1}}
\ar[dd]^{+J_{X,y_2,[y_1,y_3],z_1}}
&&{\begin{aligned}&\scriptstyle[[X,y_1],[y_2,y_3],z_1]]+[y_1,[X,[y_2,y_3]],z_1]]\\[-.5em]
&\scriptstyle +[y_1,[y_2,y_3],[X,z_1]]\end{aligned}}
\ar[dd]_{\widehat{J}_{[X,y_1],y_2,y_3,z_1}+[y_1,\widetilde{J}_{x_1,x_2,y_2,y_3},z_1]}
\ar[dd]^{+\widehat{J}_{y_1,y_2,y_3,[X,z_1]}}\\
&&\\
{\begin{aligned}&\scriptstyle [[X,[y_1,y_2]],y_3,z_1]+ [[y_1,y_2],[X,y_3],z_1]\\[-.5em]
&\scriptstyle +[[y_1,y_2],y_3,[X,z_1]]+[[X,y_2],[y_1,y_3],z_1]\\[-.5em]
&\scriptstyle +[y_2,[X,[y_1,y_3]],z_1]]+[y_2,[y_1,y_3],[X,z_1]]\\[-.5em]
\end{aligned}}
\ar[dr]_{[\widetilde{J}_{x_1,x_2,y_1,y_2},y_3,z_1]+1+1+1+[y_2,\widetilde{J}_{x_1,x_2,y_1,y_3},z_1]+1}&&
{\begin{aligned}&\scriptstyle[[X,y_1],y_2]],y_3,z_1]+ [y_2,[[X,y_1],y_3],z_1]\\[-.5em]
&\scriptstyle+[y_1,[[X,y_2],y_3],z_1]]+[y_1,[y_2,[X,y_3]],z_1]\\[-.5em]
&\scriptstyle+[[y_1,y_2],y_3,[X,z_1]]+[y_2,[y_1,y_3],[X,z_1]]\\[-.5em]
\end{aligned}}
\ar[dl]^{1+1+\widehat{J}_{y_1,[x_1,x_2,y_2],y_3,z_1}+\widehat{J}_{y_1,y_2,[x_1,x_2,y_3],z_1}+1+1}\\
&R_2&}
\\ \\
$$
where $R_2$ is given by
$$
\begin{aligned}
R_2=&[[[x_1, x_2, y_1], y_2], y_3, z_1] +[[y_1,[x_1, x_2, y_2], y_3, z_1] \\
&+[[y_1, y_2],[x_1, x_2, y_3], z_1]+[[y_1, y_2], y_3,[x_1, x_2, z_1]] \\
&+[[x_1, x_2, y_2],[y_1, y_3], z_1]] +[y_2,[[x_1, x_2, y_1], y_3, z_1]]]\\
&+[y_2,[y_1,[x_1, x_2, y_3]], z_1]]+[y_2,[y_1, y_3],[x_1, x_2, z_1]],
\end{aligned}
$$

$$\def\objectstyle{\scriptstyle}
  \def\labelstyle{\scriptstyle}
\xymatrix{
&[X,[y_1,y_2,[z_1,z_2]]]\ar[dr]^{J_{X,y_1,y_2,[z_1,z_2]}}\ar[dl]_{[X,\widetilde{J}_{Y,z_1,z_2}]}&\\
[X,[[Y,z_1],z_2]]+[X,[z_1,[Y,z_2]]]
\ar[dd]^{\widetilde{J}_{X,[Y,z_1],z_2}+\widetilde{J}_{X,z_1,[Y,z_2]}}
&&{\begin{aligned}&\scriptstyle[[X,y_1],y_2,[z_1,z_2]]+[y_1,[X,y_2],[z_1,z_2]]\\[-.5em]
&\scriptstyle\quad+[Y,[X,[z_1,z_2]]]\end{aligned}}
\ar[dd]_{1+1+[Y,\widetilde{J}_{X,z_1,z_2}]}\\
&&\\
{\begin{aligned}&\scriptstyle\quad[[X,[Y,z_1]],z_2]+ [[Y,z_1],[X,z_2]]\\[-.5em]
&\scriptstyle\quad+[[X,z_1],[Y,z_2]]+[z_1,[X,[Y,z_2]]]\end{aligned}}
\ar[dr]_{[J_{X,Y,z_1},z_2]+1+1+[z_1,J_{X,Y,z_2}]}&&
{\begin{aligned}&\scriptstyle [[X,y_1],y_2,[z_1,z_2]]+[y_1,[X,y_2],[z_1,z_2]]\\[-.5em]
&\scriptstyle\quad+[Y,[[X,z_1],z_2]] +[Y,[z_1,[X,z_2]]] \end{aligned}}
\ar[dl]^{\widetilde{J}_{X\circ Y,z_1,z_2}+\widetilde{J}_{Y,[X,z_1],z_2}+\widetilde{J}_{Y,z_1,[X,z_2]}}\\
&R_3&}
\\ \\
$$
where $R_3$ is given by
$$
\begin{aligned}
R_3=&\left[\left[\left[x_{1}, x_{2}, y_{1}\right], y_{2}, z_{1}\right], z_{2}\right] +\left[\left[y_{1},\left[x_{1}, x_{2}, y_{2}\right], z_{1}\right], z_{2}\right] \\
&+\left[\left[y_{1}, y_{2},\left[x_{1}, x_{2}, z_{1}\right]\right], z_{2}\right]+\left[\left[y_{1}, y_{2}, z_{1}\right],\left[x_{1}, x_{2}, z_{2}\right]\right] \\
&+\left[\left[x_{1}, x_{2}, z_{1}\right],\left[y_{1}, y_{2}, z_{2}\right]\right] +\left[z_{1},\left[\left[x_{1}, x_{2}, y_{1}], y_{2}, z_{2}\right]\right]\right]\\
&+\left[z_{1},\left[y_{1},\left[x_{1}, x_{2}, y_{2}\right], z_{2}\right]\right]+\left[z_{1},\left[y_{1}, y_{2},\left[x_{1}, x_{2}, z_{2}\right]\right]\right],
\end{aligned}
$$

$$\def\objectstyle{\scriptstyle}
  \def\labelstyle{\scriptstyle}
\xymatrix{
&\scriptstyle [X,[Y,[z_1,z_2,z_3]]]
\ar[dr]^{J_{X,y_1,y_2,[z_1,z_2,z_3]}}\ar[dl]_{[X,J_{Y,z_1,z_2,z_3}]}&\\
{\begin{aligned}&\scriptstyle [X,[[Y,z_1],z_2,z_3]]+[X,[z_1,[Y,z_2],z_3]]\\[-.5em]
&\scriptstyle +[X,[z_1,z_2,[Y,z_3]]]
\end{aligned}}
\ar[dd]^{J_{X,[Y,z_1],z_2,z_3}+J_{X,z_1,[Y,z_2],z_3}+J_{X,z_1,z_2,[Y,z_3]}}
&&{\begin{aligned}
&\scriptstyle  [[X,y_1],y_2,[z_1,z_2,z_3]]+[y_1,[X,y_2],[z_1,z_2,z_3]]\\[-.5em]
&\scriptstyle +[y_1,y_2,[X,[z_1,z_2,z_3]]]
\end{aligned}}
\ar[dd]_{1+1+[Y,J_{X,z_1,z_2,z_3}]}\\
&&\\
{\begin{aligned}
&\scriptstyle[[X,[Y,z_1]],z_2,z_3]+[[Y,z_1],[X,z_2],z_3]\\[-.5em]
&\scriptstyle+[[Y,z_1],z_2,[X,z_3]]+[[X,z_1],[Y,z_2],z_3]\\[-.5em]
&\scriptstyle+[z_1,[X,[Y,z_2]],z_3]+[z_1,[Y,z_2],[X,z_3]]\\[-.5em]
&\scriptstyle+[[X,z_1],z_2,[Y,z_3]]+[z_1,[X,z_2],[Y,z_3]]\\[-.5em]
&\scriptstyle+[z_1,z_2,[X,[Y,z_3]]],
\end{aligned}}
\ar[dr]_{\qquad \qquad  \qquad \qquad \qquad P}&&
{\begin{aligned}
&\scriptstyle [[X,y_1],y_2,[z_1,z_2,z_3]]+[y_1,[X,y_2],[z_1,z_2,z_3]]\\[-.5em]
&\scriptstyle +[Y,[[X,z_1],z_2,z_3]]+[Y,[z_1,[X,z_2],z_3]]\\[-.5em]
&\scriptstyle +[Y,[z_1,z_2,[X,z_3]]],
\end{aligned}}
\ar[dl]^{Q}\\
&R_4&}
\\ \\
$$
where $P,Q$ is given by
\begin{eqnarray*}
P&=&[J_{X,y_1,y_2,z_1},z_2,z_3]+1+1+1+[z_1,J_{X,y_1,y_2,z_2},z_3]\\
&&+1+1+1+[z_1,z_2,J_{X,y_1,y_2,z_3}],\\
Q&=&J_{[X,y_1],y_2,z_1,z_2,z_3}+J_{y_1,[X,y_2],z_1,z_2,z_3}+J_{Y,[X,z_1],z_2,z_3}\\
&&+J_{Y,z_1,[X,z_2],z_3}+J_{Y,z_1,z_2,[X,z_3]},
\end{eqnarray*}
 and $R_4$ is given by
\begin{eqnarray*}
R_4&=&[[[X,y_1],y_2,z_1],z_2,z_3]+[[y_1,[X,y_2],z_1],z_2,z_3]+[[Y,[X,z_1]],z_2,z_3]\\
&&+[[Y,z_1],[X,z_2],z_3]+[[Y,z_1],z_2,[X,z_3]]+[[X,z_1],[Y,z_2],z_3]\\
&&+[z_1,[[X,y_1],y_2,z_2],z_3]+[z_1,[y_1,[X,y_2],z_2],z_3]+[z_1,[Y,[X,z_2]],z_3]\\
&&+[z_1,[Y,z_2],[X,z_3]]+[[X,z_1],z_2,[Y,z_3]]+[z_1,[X,z_2],[Y,z_3]]\\
&&+[z_1,z_2,[[X,y_1],y_2,z_3]]+[z_1,z_2,[y_1,[X,y_2],z_3]]+[z_1,z_2,[Y,[X,z_3]]].
\end{eqnarray*}
\end{defi}

\begin{defi}\label{defi:3liehomo}
  Given Lie triple 2-algebras $L$ and $L'$, a homomorphism $F:L\longrightarrow L'$ consists of:
\begin{itemize}
\item[$\bullet$] a linear functor $F$ from the underlying $2$-vector space of $L$ to that of $L',$ and
     \item[$\bullet$] a bilinear natural transformation
 $$F_2(x,y):[F_0(x),F_0(y)]' \longrightarrow F_0([x,y]),$$
  such that
\begin{eqnarray*}
  &&(F_1J_{x,y,z})F_2([x,y],z)([F_2(x,y),F_0(z)])\\
  &=&(F_2(x,[y,z])+F_2([x,z],y))([F_0(x),F_2(y,z)]+[F_2(x,z),F_0(y)])J_{F_0(x),F_0(y),F_0(z)},
\end{eqnarray*}

\item[$\bullet$] a trilinear natural transformation
 $$F_3(x,y,z):[F_0(x),F_0(y),F_0(z)]'\longrightarrow F_0[x,y,z]$$
 such that
 \begin{eqnarray*}
   &&[F_0(x_1),F_0(x_2),F_3(x_3,x_4,x_5)] F_3(x_1, x_2, [x_3,x_4,x_5]) F_1(J_{x_1,x_2,x_3,x_4,x_5})\\
  &=& J'_{F_0(x_1),F_0(x_2),F_0(x_3),F_0(x_4),F_0(x_5)}([F_3(x_1,x_2,x_3),F_0(x_4),F_0(x_5)]\\
  &&+[F_0(x_4),,F_3(x_1,x_2,x_4),F_0(x_5),]+[F_0(x_3),F_0(x_4),F_3(x_1,x_2,x_5)])\\
  &&(F_3([x_1,x_2,x_3],x_4,x_5)+F_3(x_3,[x_1,x_2,x_4],x_5)+F_3(x_3,x_4,[x_1,x_2,x_5])).
   \end{eqnarray*}
\end{itemize}
\end{defi}
The identity homomorphism $id_{L}:L\longrightarrow L$ has the identity functor as its underlying functor, together with an identity natural transformation as $(id_L)_2.$ Let $L,L'$ and $L''$ be Lie triple 2-algebras, the composite of Lie triple 2-algebra homomorphisms $F:L\longrightarrow L'$ and $G:L'\longrightarrow L''$ which we denote by $G\circ F$, is given by letting the functor $((G\circ F)_0,(G\circ F)_1)$ be the usual composition of $(G_0,G_1)$ and $(F_0,F_1)$, and letting $(G\circ F)_2$, $(G\circ F)_3$ be the following composite: $(G\circ F)_2(x,y)=G_1(F_2(x,y))\circ G_2(F_0(x),F_0(y))$,
$(G\circ F)_3(x,y,z)=G_1(F_3(x,y,z))\circ G_3(F_0(x),F_0(y),F_0(z))$.

Thus we obtain a category   {\rm \bf Lie$2$Triple} with   Lie triple 2-algebras as objects, Lie triple 2-algebra homomorphisms  as morphisms.

Now we  establish the equivalence between the category of Lie triple 2-algebras and that of $2$-term $L_\infty$-triple algebras.

\begin{thm}\label{mainthm2}
The categories {\rm \bf $2$Term-L$_\infty$} and {\rm \bf Lie$2$Triple} are equivalent.
\end{thm}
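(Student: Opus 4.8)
The plan is to establish the equivalence {\rm \bf $2$Term-L$_\infty$} $\simeq$ {\rm \bf Lie$2$Triple} by constructing a pair of functors in opposite directions and checking they are mutually (naturally) inverse. The key conceptual input is the dictionary between a $2$-term complex of vector spaces $V_1\stackrel{\dM}{\to}V_0$ and the associated $2$-vector space $\V$ with $\V_0=V_0$, $\V_1=V_0\oplus V_1$, source $s(v+m)=v$ and target $t(v+m)=v+\dM m$, exactly as in \eqref{eqn:V}. First I would define a functor $\huaT:${\rm \bf $2$Term-L$_\infty$}$\to${\rm \bf Lie$2$Triple}. Given $\huaV=(V_1,V_0,\dM,l_3,\widehat{l}_4,\widetilde{l}_4,l_5)$, take the underlying $2$-vector space to be $\V$; define the bilinear functor $[\cdot,\cdot]$ and trilinear functor $[\cdot,\cdot,\cdot]$ on objects by the given brackets on $V_0$, and on morphisms by the usual formula extending a bilinear (resp.\ trilinear) map on the complex to a functor (using the degree-$(0,1)$ and degree-$1$ components from Definition \ref{lem:2term 3L}); define the four natural isomorphisms $J,\widehat J,\widetilde J, J_{X,-,-,-}$ at a tuple of objects by adding $l_3$ (resp.\ $\widehat l_4$, $\widetilde l_4$, $l_5$) to the identity, i.e.\ $J_{x_1,x_2,x_3}$ has source $[x_1,[x_2,x_3]]+[x_3,x_2,x_1]$ and is the morphism with $V_1$-component $l_3(x_1,x_2,x_3)$. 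The content is then that the naturality squares of these isomorphisms are precisely the "$\dM$-compatibility" equations $(\rm e_1)$–$(\rm e_4)$, and the coherence laws $(\rm J_1)$–$(\rm J_4)$ of Definition on Lie triple 2-algebras unwind, under $t(v+m)=v+\dM m$ and the bracket-on-morphisms formula, to exactly the identities $(\rm f_1)$–$(\rm f_4)$ of Definition \ref{lem:2term 3L}. For morphisms, a $2$-term $L_\infty$-homomorphism $(\phi_0,\phi_1,\phi_2,\phi_3)$ is sent to the linear functor with object map $\phi_0$ and morphism map $\phi_0\oplus\phi_1$, with $F_2(x,y)$ having $V_1$-component $\phi_2(x,y)$ and $F_3$ having $V_1$-component $\phi_3$; equations \eqref{eq:homo01}–\eqref{eq:morph12} are then exactly the conditions in Definition \ref{defi:3liehomo}.

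Next I would define a functor $\huaS$ in the other direction. Given a Lie triple 2-algebra $L$, apply \eqref{eq:complex} to get the $2$-term complex $V_1:=\Ker(s)\stackrel{t}{\to}V_0:=L_0$; restrict the functors $[\cdot,\cdot]$, $[\cdot,\cdot,\cdot]$ to objects and to $\Ker(s)$ to recover the brackets $[\cdot,\cdot]:V_i\times V_j\to V_{i+j}$ and $[\cdot,\cdot,\cdot]:V_i\times V_j\times V_k\to V_{i+j+k}$ (here one uses functoriality—bilinearity/trilinearity of the functors and the interchange law—to see these land in the right degrees and satisfy the symmetry conditions $(\rm i)$, $(\rm ii)$ and the strictness conditions $(\rm a)$–$(\rm d)$); and define $l_3,\widehat l_4,\widetilde l_4,l_5$ to be the $\Ker(s)$-valued maps obtained from the natural isomorphisms $J,\widehat J,\widetilde J, J_{X,-,-,-}$ by composing with the appropriate identity morphism so as to land in $\Ker(s)=V_1$ (so $l_3(x_1,x_2,x_3):=J_{x_1,x_2,x_3}-1_{[x_1,[x_2,x_3]]+[x_3,x_2,x_1]}$, viewed in $\Ker(s)$ via $t$). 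The verification that $(\rm e_1)$–$(\rm e_4)$ hold is naturality of $J$'s; that $(\rm f_1)$–$(\rm f_4)$ hold is the coherence $(\rm J_1)$–$(\rm J_4)$. On morphisms, a Lie triple 2-algebra homomorphism $F=(F_0,F_1,F_2,F_3)$ maps to $(\phi_0,\phi_1,\phi_2,\phi_3)$ with $\phi_0=F_0|_{L_0}$, $\phi_1=F_1|_{\Ker s}$, and $\phi_2,\phi_3$ the $\Ker(s)$-components of $F_2,F_3$; the conditions in Definition \ref{defi:3liehomo} become \eqref{eq:homo01}–\eqref{eq:morph12}.

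Finally I would check $\huaT$ and $\huaS$ are quasi-inverse. For $\huaS\circ\huaT\cong\Id$, note that starting from a $2$-term $L_\infty$-triple algebra, forming $\V$ and then taking $\Ker(s)\stackrel{t}{\to}\V_0$ gives a complex canonically isomorphic to $V_1\stackrel{\dM}{\to}V_0$ via $m\mapsto 0+m\in\Ker(s)$ (since $t(0+m)=\dM m$); all the brackets, $l_3,\widehat l_4,\widetilde l_4,l_5$ transport correctly because they were defined by the same formulas; this isomorphism is natural in homomorphisms. For $\huaT\circ\huaS\cong\Id$, given a Lie triple 2-algebra $L$ one recovers from $\Ker(s)\stackrel{t}{\to}L_0$ a $2$-vector space isomorphic to the original via the standard comparison $L_1\cong L_0\oplus\Ker(s)$, $h\mapsto(s(h), h-1_{s(h)})$, and one checks this comparison is compatible with the bracket functors and intertwines $J,\widehat J,\widetilde J, J_{X,-,-,-}$ with the natural isomorphisms built from $l_3,\widehat l_4,\widetilde l_4,l_5$; again naturality in $L$. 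Together with the observation that $\huaT,\huaS$ preserve identities and composition (which follows from the composition formulas for homomorphisms in both categories being matched under the dictionary), this yields the asserted equivalence.

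The main obstacle I expect is not any single conceptual step but the sheer bookkeeping in showing that the coherence pentagon-type laws $(\rm J_1)$–$(\rm J_4)$, written as composites of natural isomorphisms, translate term-by-term into the algebraic identities $(\rm f_1)$–$(\rm f_4)$: one must carefully track which arrow contributes an identity versus an $l_i$-component, use $t(v+m)=v+\dM m$ to read off the $V_0$- and $V_1$-parts of each composite, and repeatedly invoke the formula for how a bilinear/trilinear functor acts on morphisms of the form $1_x$ paired with a $\Ker(s)$-element (e.g.\ $[\widetilde J_{x_1,x_2,y_1,y_2},y_3]$ producing $[\widetilde l_4(\dots),y_3]$ in degree one). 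Equivalently, one must verify that the naturality squares for the $J$'s are exactly $(\rm e_1)$–$(\rm e_4)$ and no more. I would organize this by treating the four coherence laws one at a time, in each case writing both the source object and the total $V_1$-valued "defect" of each path and matching, and I would remark that the strictness conditions $(\rm a)$–$(\rm d)$ plus the Leibniz/fundamental identities built into the functoriality of $[\cdot,\cdot]$ and $[\cdot,\cdot,\cdot]$ guarantee that all intermediate objects appearing in the diagrams ($R_1,\dots,R_4$) agree on both sides.
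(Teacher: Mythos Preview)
Your proposal is correct and follows essentially the same approach as the paper: construct the functor $\huaT$ by building the $2$-vector space $\V$ from the complex, defining the bracket functors by the explicit formulas on $V_0\oplus V_1$, and taking the natural isomorphisms to have $V_1$-component $l_3,\widehat l_4,\widetilde l_4,l_5$; construct the inverse $\huaS$ by taking $\Ker(s)\stackrel{t}{\to}L_0$ and setting $l_i=J_i-1_{s(J_i)}$; and then observe the two are mutually inverse. In fact your outline is somewhat more detailed than the paper's proof, which records the same formulas but leaves the matching of $(\rm J_1)$--$(\rm J_4)$ with $(\rm f_1)$--$(\rm f_4)$ and the verification of the equivalence largely to the reader.
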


\begin{proof} First we construct a 2-functor $T:$ {\rm \bf $2$Term-L$_\infty$} $\longrightarrow$ {\rm \bf Lie$2$Triple}.
Given a $2$-term $L_\infty$-triple algebra $\huaV=(V_1,V_0,\dM,l_2,l_3,l_4,l_5)$, we have a $2$-vector space $L$ via \eqref{eqn:V}.
Let $L_0=V_0, L_1=V_0\oplus V_1$ and the source and the target map be given by $s(x+f)=x$ and $t(x+f)=x+\dM f$. We define
  a skew-symmetric bilinear functor $[\cdot,\cdot]:L\times L\longrightarrow L$ and trilinear functor $[\cdot,\cdot,\cdot]:L\times L\times L\longrightarrow L$ by
  \begin{eqnarray*}
    [x+f,y+g]&=&[x,y]+[x,g]+[f,y]+[f,\dM g],\\
  {[x+f,y+g,z+h]}&=&l_3(x,y,z)+l_3(x,y,h)+l_3(x,g,z)+l_3(f,y,z)\\
  &&+l_3(\dM f,g,z)+l_3(\dM f,y,h)+l_3(x,\dM g,h)+l_3(\dM f,\dM g,h),
  \end{eqnarray*}
and the natural isomorphism by
\begin{eqnarray*}
J_{x_1,x_2,x_3}&=&([x_1,[x_2,x_3]],l_3(x_1,x_2,x_3)),\\
\widehat{J}_{x_1,x_2,x_3,y_1}&=&([x_1,x_2,[x_3,y_1]],\widehat{l}_4(x_1,x_2,x_3,y_1)),\\
\widetilde{J}_{x_1,x_2,y_1,y_2}&=&([x_1,x_2,[y_1,y_2]],\widetilde{l}_4(x_1,x_2,y_1,y_2)),\\
J_{x_1,x_2,y_1,y_2,y_3}&=&([x_1,x_2,[y_1,y_2,y_3]],l_5(x_1,x_2,y_1,y_2,y_3)).
\end{eqnarray*}

The source of $J_{x_1,x_2,x_3},  \widehat{J}_{x_1,x_2,x_3,y_1},  \widetilde{J}_{x_1,x_2,y_1,y_2}, J_{x_1,x_2,y_1,y_2,y_3}$ are
$$[x_1,[x_2,x_3]],\quad [x_1,x_2,[x_3,y_1]],\quad  [x_1,x_2,[y_1,y_2]],\quad [x_1,x_2,[y_1,y_2,y_3]].$$
The target of them are
\begin{eqnarray*}
J_{x_1,x_2,x_3}&=&[x_1,[x_2,x_3]]+l_3(x_1,x_2,x_3)\\
&=&[[x_1, x_2],x_3]+ [x_2, [x_1,x_3]],\\
\widehat{J}_{x_1,x_2,x_3,y_1}&=&[x_1,x_2,[x_3,y_1]]+\widehat{l}_4(x_1,x_2,x_3,y_1)\\
&=& [[x_1, x_2],x_3],y_1]+[[x_2, [x_1,x_3],y_1],\\
\widetilde{J}_{x_1,x_2,y_1,y_2}&=&[x_1,x_2,[y_1,y_2]]+\widetilde{l}_4(x_1,x_2,y_1,y_2)\\
&=&[[ x_1, x_2, y_1], y_2] + [y_1, [ x_1, x_2, y_2]],\\
J_{x_1,x_2,y_1,y_2,y_3}&=&[x_1,x_2,[y_1,y_2,y_3]]+l_5(x_1,x_2,y_1,y_2,y_3)\\
&=& [[x_1,x_2, y_1], y_2, y_3] + [y_1, [x_1,x_2, y_2], y_3] + [y_1, y_2, [x_1,x_2, y_3]].
\end{eqnarray*}
 By Conditions in Definition \ref{lem:2term 3L}, we deduce that the natural transformation identity holds.
This completes the construction of a Lie triple 2-algebra $L=\Phi(\huaV)$ from a $2$-term $L_\infty$-triple algebra $\huaV.$

Next we construct a Lie triple 2-algebra homomorphism $\Phi(\phi):\Phi (\huaV)\longrightarrow \Phi(\huaV')$ from a $L_\infty$-triple algebra homomorphism $\phi=(\phi_0,\phi_1,\phi_2,\phi_3):\huaV\longrightarrow \huaV'$ between $2$-term $L_\infty$-triple algebras.
Let $T(\huaV)=L$ and $T(\huaV')=L'.$ We define the underlying linear functor of $T(\phi)=F$ with $F_0=\phi_0$, $F_1=\phi_0\oplus\phi_1.$ Define $F_2:V_0\times V_0\longrightarrow V_0'\oplus V_1'$ and $F_3:V_0\times V_0\times V_0\longrightarrow V_0'\oplus V_1'$ by
\begin{eqnarray*}
F_2(x_1,x_2,)&=&([\phi_0(x_1),\phi_0(x_2)]',\phi_2(x_1,x_2)),\\
F_3(x_1,x_2,x_3)&=&([\phi_0(x_1),\phi_0(x_2),\phi_0(x_3)]',\phi_3(x_1,x_2,x_3)).
\end{eqnarray*}
Then $F_2(x_1,x_2)$ is a natural isomorphism from $[F_0(x_1),F_0(x_2)]'$ to $F_0[x_1,x_2]$. $F_3(x_1,x_2,x_3)$ is a natural isomorphism from $[F_0(x_1),F_0(x_2),F_0(x_3)]'$ to $F_0[x_1,x_2,x_3]$, and $F=(F_0,F_1,F_2,F_3)$ is a homomorphism from $L$ to $L'$.
We can also prove that  $\Phi$ preserve identities and composition of homomorphisms. Thus $\Phi$ is a functor.

Conversely, we construct a functor $\Psi:$ {\rm \bf Lie$2$Triple} $\longrightarrow$ {\rm \bf $2$Term-L$_\infty.$}
Given a Lie triple 2-algebra $L$, we obtain a complex of vector spaces  $\huaV=\Psi(L)$ via \eqref{eq:complex}. More precisely, $V_1=\ker(s),V_0=L_0$ and $\dM=t|_{\ker(s)}$.
For all $x_i\in V_0=L_0$ and $f,g,h\in V_1\subseteq L_1$,
we define  $l_3$ and $l_5$ as follows:
\begin{itemize}
\item[$\rm(1)$]$l_3(x_1,x_2,x_3)=[1_{x_1},1_{x_2},1_{x_3}],$
\item[$\rm(2)$]$l_3(x_1,x_2,h)=[1_{x_1},1_{x_2},h],$
\item[$\rm(3)$]$l_3(x_1,f,h)=0,~l_3(f,h,g)=0,$
\item[$\rm(4)$]$\widehat{l}_4(x_1,x_2,x_3,y_1)=\widehat{J}_{x_1,x_2,x_3,y_1}-1_{s(\widehat{J}_{x_1,x_2,x_3,y_1})},$
\item[$\rm(4)$] $\widetilde{l}_4(x_1,x_2,y_1,y_2)=\widetilde{J}_{x_1,x_2,y_1,y_2}-1_{s(\widetilde{J}_{x_1,x_2,y_1,y_2})},$
\item[$\rm(5)$]$l_5(x_1,x_2,x_3,x_4,x_5)=J_{x_1,x_2,x_3,x_4,x_5}-1_{s(J_{x_1,x_2,x_3,x_4,x_5})}.$
\end{itemize}
The
various conditions of $L$ being a Lie triple 2-algebra imply that $\huaV=(V_1,V_0,\dM,l_3,\widehat{l}_4, \widetilde{l}_4,l_5)$ is 2-term $L_\infty$-triple algebra.
This completes the construction of a $2$-term $L_\infty$-triple algebra $\huaV=\Psi(L)$ from a Lie triple 2-algebra $L$.

Let $L$ and $L'$ be Lie triple 2-algebras, and $F=(F_0,F_1,F_2,F_3):L\longrightarrow L'$ a homomorphism. Let $\Psi(L)=\huaV$ and $\Psi(L')=\huaV'$. We are going to construct a homomorphism $\phi=\Psi(F):\huaV\longrightarrow \huaV'$.
Let $\phi_0=F_0$, $\phi_1=F_1|_{\ker(s)}$. Define $\phi_2:V_0\times V_0\times V_0\longrightarrow V'_1$ by
 \begin{eqnarray*}
 \phi_2(x_1,x_2)&=&F_3(x_1,x_2)-1_{s(F_3(x_1,x_2))},\\
\phi_3(x_1,x_2,x_3)&=&F_3(x_1,x_2,x_3)-1_{s(F_3(x_1,x_2,x_3))}.
\end{eqnarray*}
Then we get
 \begin{eqnarray*}
 \dM' \phi_2(x_1,x_2)&=&\phi_0(l_2(x_1,x_2))-l_2'(\phi_0(x_1),\phi_0(x_2)),\\
\dM' \phi_3(x_1,x_2,x_3)
&=&\phi_0(l_3(x_1,x_2,x_3))-l_3'(\phi_0(x_1),\phi_0(x_2),\phi_0(x_3)).
\end{eqnarray*}
Thus, $\phi=\Psi(F)$ is a homomorphism between 2-term $L_\infty$-triple algebras.

Finally, one see that  $\Psi$ and $\Psi$ are inverse to each other. The proof is completed.
\end{proof}

By Theorem \ref{mainthm2}, we see that the concept of Lie triple 2-algebras and 2-term $L_\infty$-triple algebras are equivalent. Thus, we will call a 2-term $L_\infty$-triple algebra as a Lie triple 2-algebra in the sequel.

\section{Skeletal and strict Lie triple 2-algebras}
In this section, we study two special cases of Lie triple 2-algebras $\huaV=(V_1,V_0,\dM,l_3,\widetilde{l}_4, \widehat{l}_4,l_5)$.
The first case is the {\em skeletal} Lie triple 2-algebras when the  2-term $L_\infty$-triple algebras   if $\dM=0$.
The second one is the crossed module of Lie triple algebras which is equivalent to strict case of a Lie triple 2-algebra when $(l_3,\widetilde{l}_4, \widehat{l}_4,l_5)=0$.
We will construct crossed module of Lie triple algebras by using crossed module of Leibniz algebras and reductive crossed module of Lie algebras.

\subsection{Skeletal  Lie triple 2-algebras}

\begin{thm}
  There is a one-to-one correspondence between skeletal 2-term Lie triple 2-algebras and druples $(T,V,\rho,\theta,\DD)$,
  where $T$ is a Lie triple algebras, $V$ is a vector space,   $(\rho,\theta,\DD)$ is a  representation of $T$ on $V$,
  $(l_3,\widetilde{l}_4, \widehat{l}_4,l_5)$  is a (3,4,4,5)-cocycle of the Lie triple algebra $T$ with  coefficients in the representation $V$.
\end{thm}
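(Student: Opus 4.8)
\emph{The dictionary.} The plan is to write the correspondence down explicitly and then check that the two assignments are mutually inverse; all the content is the translation, under $\dM=0$, between the defining equations of a $2$-term $L_\infty$-triple algebra and the defining data of a Lie triple algebra, a representation, and a $(3,4,4,5)$-cocycle. From a skeletal $2$-term $L_\infty$-triple algebra $\huaV=(V_1,V_0,0,l_3,\widehat l_4,\widetilde l_4,l_5)$, set $T:=V_0$ with the bilinear and trilinear brackets it carries, $V:=V_1$, and
$$\rho(x)v:=[x,v],\qquad \DD(x_1,x_2)v:=[x_1,x_2,v],\qquad \theta(x_1,x_2)v:=[v,x_1,x_2]\qquad(x,x_i\in V_0,\ v\in V_1),$$
the other mixed brackets being forced by $[v,x]=-\rho(x)v$ and $[x_1,v,x_2]=-\theta(x_1,x_2)v$. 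Conversely, from $(T,V,\rho,\theta,\DD)$ with a $(3,4,4,5)$-cocycle $(l_3,\widehat l_4,\widetilde l_4,l_5)$, take $V_0:=T$, $V_1:=V$, $\dM:=0$, the brackets on $V_0$ those of $T$, the mixed brackets as above, every bracket with at least two inputs in $V_1$ equal to $0$, and the higher maps the given cochains. These assignments are manifestly inverse to each other, so the theorem amounts to checking that each produces a legitimate object.

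\smallskip
\emph{Skeletal $\huaV\Rightarrow$ data.} With $\dM=0$, conditions $(i)$ and $(ii)$ are (LY1) and (LY2), and the ``$\dM(\,\cdot\,)$'' halves of $(e_1),(e_2),(e_3),(e_4)$ are (LY3),(LY4),(LY5),(LY6), so $T$ is a Lie triple algebra. Conditions $(a)$ and $(c)$ force $[u_1,u_2]=0$ and the vanishing of the trilinear bracket on any two or three $V_1$-inputs, which is exactly the statement that $V$ carries nothing beyond $\rho,\DD,\theta$. Inserting the formulas for $\rho,\DD,\theta$ (and the sign rules) into the ``$l_k(\,\cdot\,,\dM u,\,\cdot\,)$'' halves of $(e_1)$--$(e_4)$ yields the representation identities --- (R31) from $(e_1)$, (R41) from $(e_2)$, (R51) and (R52) from the two mixed equations of $(e_3)$, (R62) from $(e_4)$; the antisymmetry $\DD(x_1,x_2)=-\DD(x_2,x_1)$ needed by $(ii)$ comes from symmetrising (R31), and (R42), (R61) follow from the rest (using (R31) to eliminate $\DD$). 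It then remains to recognise $(f_1)$--$(f_4)$ as the cocycle equations: rewriting the mixed brackets via $[x_1,x_2,l_3(\,\cdot\,)]=\DD(x_1,x_2)l_3(\,\cdot\,)$, $[\widetilde l_4(\,\cdot\,),y]=-\rho(y)\widetilde l_4(\,\cdot\,)$, $[y,\widetilde l_4(\,\cdot\,)]=\rho(y)\widetilde l_4(\,\cdot\,)$, $[l_5(\,\cdot\,),z]=-\rho(z)l_5(\,\cdot\,)$, and $\widetilde l_4([X,Y],z_1,z_2)=\widetilde l_4([x_1,x_2,y_1],y_2,z_1,z_2)+\widetilde l_4(y_1,[x_1,x_2,y_2],z_1,z_2)$, then moving all terms to one side and using the skew-symmetries of the cochains, $(f_1),(f_2),(f_3),(f_4)$ become $\delta^*_{\textrm{I}}(l_3)=0$, $\delta^*_{\textrm{II}}(\widehat l_4)=0$, $\delta_{\textrm{I}}(\widetilde l_4)=0$, $\delta_{\textrm{II}}(l_5)=0$ of \eqref{3cocycle01}--\eqref{3cocycle04}, i.e. $\Delta_3(l_3,\widehat l_4,\widetilde l_4,l_5)=0$, so $(l_3,\widehat l_4,\widetilde l_4,l_5)\in Z^{(3,4,4,5)}(T,V)$.

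\smallskip
\emph{Data $\Rightarrow$ skeletal $\huaV$, and conclusion.} Running the same computation in reverse: since $\dM=0$, the ``$\dM(\,\cdot\,)$'' equations of $(e_1)$--$(e_4)$ are (LY1)--(LY6) and hold because $T$ is a Lie triple algebra; conditions $(a)$--$(d)$ and the ``$l_k(\,\cdot\,,\dM u,\,\cdot\,)$'' equations reduce to (R31)--(R62) (or to $0=0$); and $(f_1)$--$(f_4)$ are the four components of $\Delta_3(l_3,\widehat l_4,\widetilde l_4,l_5)=0$. The two constructions compose to the identity in either order, giving the bijection.

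\smallskip
\emph{Main obstacle.} The step I expect to be hardest is the last one of the first direction, matching $(f_1)$--$(f_4)$ with \eqref{3cocycle01}--\eqref{3cocycle04}. One has to keep careful account of the signs generated by $[w,x]=-[x,w]$ and $[x,u,y]=-[u,x,y]$, of the cyclic reorderings of the $l_5$- and $\widetilde l_4$-terms (the skew-symmetry of $l_5$ in its third and fourth arguments is precisely what turns a string such as $l_5(x_1,x_2,y_1,y_2,y_3)+l_5(x_1,x_2,y_3,y_1,y_2)-l_5(x_1,x_2,y_3,y_2,y_1)$ in $(f_1)$ into the cyclic sum occurring in $\delta^*_{\textrm{I}}(l_3)$), and of the fact that the whole list (R31)--(R62) must be accounted for, (R42) and (R61) being the ones needing the small extra deduction from (R31).
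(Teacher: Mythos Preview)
Your proposal is correct and follows essentially the same approach as the paper's proof: set up the dictionary $T=V_0$, $V=V_1$, $\rho(x)v=[x,v]$, $\DD(x_1,x_2)v=[x_1,x_2,v]$, $\theta(x_1,x_2)v=[v,x_1,x_2]$, then read off (LY1)--(LY6) from the $\dM(\cdot)$-halves of $(e_1)$--$(e_4)$, the representation identities from the mixed halves, and the $(3,4,4,5)$-cocycle equations from $(f_1)$--$(f_4)$, with the converse direction by reversing the translation. Your write-up is in fact considerably more careful than the paper's terse argument---in particular you flag the bookkeeping needed to recover (R42) and (R61) and the sign/cyclic manipulations matching $(f_1)$--$(f_4)$ to \eqref{3cocycle01}--\eqref{3cocycle04}, which the paper passes over with ``it is easy to verify''.
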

\begin{proof}
 Let $\huaV$ be a skeletal  2-term $L_\infty$-triple algebra.
By Conditions $(e_1)$--$(e_4)$, we see that $V_0$ is  a Lie triple algebra.
Define the representation maps $D(x_1,x_2)$, $\theta(x_1,x_2)$, $\rho(x_1)$  by
\begin{eqnarray*}
D(x_1,x_2)(u):=[x_1,x_2,u],\quad \theta(x_1,x_2)(u):=[u,x_1,x_2],\quad \rho(x_1)(u):=[x_1,u].
\end{eqnarray*}
for $x_1,x_2\in V_0$ and $u\in V_1$. Then it is easy to verify that $V_1$ is a representation of $V_0$.
Now by Conditions $(f_1)$--$(f_4)$, we have   $(l_3,\widetilde{l}_4, \widehat{l}_4,l_5)$  is a (3,4,4,5)-cocycle of the Lie triple algebra $V_0$ with  coefficients in the representation $V_1$.

Conversely, given a Lie triple algebra $T$, a representation  $(\rho,\DD,\theta)$ of $T$ on a vector space $V$, and a (3,4,4,5)-cocycle
$(l_3,\widetilde{l}_4, \widehat{l}_4,l_5)$ on $T$ with values in $V$, we define $V_0 = T, V_1 = V , \dM = 0$, and define maps
\begin{eqnarray*}
[x_1,x_2,u]:=D(x_1,x_2)(u),\quad [u,x_1,x_2]:=\theta(x_1,x_2)(u),\quad [x_1,u]:=\rho(x_1)(u).
\end{eqnarray*}
Then, it is straightforward to see that $(V_1, V_0, l_3,\widetilde{l}_4, \widehat{l}_4,l_5)$ is a skeletal Lie triple 2-algebra.
This finishes the  proof.
\end{proof}


\subsection{Crossed module of Lie triple algebras}

First, we introduce the notion of crossed modules of Lie triple algebras and establish their equivalence to strict Lie triple 2-algebras.

\begin{defi}Let $\left({T},[\cdot, \cdot]_{{T}},[\cdot, \cdot]_{{T}}\right)$ and $\left({V},[\cdot, \cdot]_{{V}},[\cdot, \cdot, \cdot]_{{V}}\right)$ be two Lie-Yamaguti algebras. We define $(\rho, \theta, D)$ to be an action of ${T}$ on ${V}$ if there is a Lie-Yamaguti algebra structure on the direct sum ${T} \oplus {V}$ defined by
$$
\begin{aligned}
{[x+u, y+v]}& =[x, y]_{{T}}+\rho(x) (v)-\rho(y) (u)+[u, v]_{{V}}, \\
{[x+u, y+v, z+w ]} & =[x, y, z]_{{T}}+D(x, y) (w)+\theta(y, z) (u)-\theta(x, z) (v)+[u, v, w]_{{V}},
\end{aligned}
$$
for all $x, y, z \in {T}$ and $u, v, w \in {V}$.
This is called the semidirect product Lie-Yamaguti algebra with respect to the action $(\rho, \theta, D)$, and is denoted by ${T} \ltimes_{\rho, \theta, D} {V}$.
\end{defi}

\begin{defi}
A crossed module of Lie triple algebras is a pair of  Lie triple algebras $T$ and $V$  together with an  action $(\rho,\DD,\theta)$ of ${T}$ on ${V}$ and a homomorphism between them $\partial:{V}\longrightarrow{T}$, such that  the following equalities hold:  $\forall  x,y,z\in {T},u,v,w\in{V}$,
\begin{eqnarray}
  \label{eq:cmc01}\partial(\rho(x)(u))&=&[x,\partial(u)]_{{T}},\\
    \label{eq:cmc1}\partial(\DD(x,y)(u))&=&[x,y,\partial(u)]_{{T}},\quad \partial(\theta(x,y)(u))=[\partial(u),x,y]_{{T}},\\
  \label{eq:cmc02}[u,v]_{{V}}&=&\rho(\partial(u))(v),\\
  \label{eq:cmc2}[u,v,w]_{{V}}&=&\DD(\partial(u),\partial(v))(w)=\theta(\partial(v),\partial(w))(u),\\
  \label{eq:cmc3}\DD(x,\partial(u))(v)&=&-\theta(x,\partial(v))(u).
\end{eqnarray}
\end{defi}

\begin{thm}
There is a one-to-one correspondence between strict Lie triple 2-algebras and crossed modules of Lie triple algebras.
\end{thm}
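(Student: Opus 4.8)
The plan is to pass through the equivalence of Theorem~\ref{mainthm2}, so that a \emph{strict} Lie triple 2-algebra is concretely a $2$-term $L_\infty$-triple algebra $\huaV=(V_1,V_0,\dM,l_3,\widehat{l}_4,\widetilde{l}_4,l_5)$ with $l_3=\widehat{l}_4=\widetilde{l}_4=l_5=0$. For such a $\huaV$ every summand occurring in conditions $(f_1)$--$(f_4)$ contains one of these vanishing maps, so $(f_1)$--$(f_4)$ are automatic; what survives are $(i)$, $(ii)$, $(a)$--$(d)$ and the bracket identities obtained from $(e_1)$--$(e_4)$ by deleting the $l$-terms. I would then set up mutually inverse assignments between these data and crossed modules of Lie triple algebras and verify the axioms term by term.

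\smallskip

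\noindent\textbf{From a strict $2$-algebra to a crossed module.} Given $\huaV$ as above, put $T:=V_0$, $V:=V_1$, $\partial:=\dM$. The pure-$V_0$ instances of $(e_1)$--$(e_4)$ are exactly (LY3)--(LY6), and $(i)$--$(ii)$ give (LY1)--(LY2), so $T$ is a Lie triple algebra. Define an action of $T$ on $V$ by $\rho(x)(u):=[x,u]$, $\DD(x,y)(u):=[x,y,u]$, $\theta(x,y)(u):=[u,x,y]$, and equip $V$ with the brackets $[u,v]_V:=[\dM u,v]$, $[u,v,w]_V:=[\dM u,\dM v,w]$; conditions $(a)$ and $(d)$ make these well posed ($[\dM u,v]=[u,\dM v]$, and $[\dM u,\dM v,w]=[\dM u,v,\dM w]=[u,\dM v,\dM w]$, the last expression also matching $\theta(\partial v,\partial w)(u)$). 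Then $\partial$ is a homomorphism and \eqref{eq:cmc01}, \eqref{eq:cmc1} hold by $(a)$, $(b)$; \eqref{eq:cmc02}, \eqref{eq:cmc2} are the very definitions of the brackets on $V$; and \eqref{eq:cmc3} follows from $(d)$ and $(ii)$ since $[x,\dM u,v]=[x,u,\dM v]=-[u,x,\dM v]$. Finally one must check that $(\rho,\DD,\theta)$ is an action, i.e. that the semidirect-product formulas make $T\oplus V$ a Lie triple algebra; equivalently the representation axioms (R31)--(R62) hold, and each of these unwinds out of the one-$V_1$-argument instances of $(e_1)$--$(e_4)$ (for example the mixed $(e_1)$ yields (R31), and the two mixed $(e_3)$'s yield (R51) and (R52)), after using $(a)$--$(d)$ to move the $V_1$-entries into standard positions.

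\smallskip

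\noindent\textbf{From a crossed module to a strict $2$-algebra.} Conversely, from a crossed module $\partial:V\to T$ with action $(\rho,\DD,\theta)$, set $V_0:=T$, $V_1:=V$, $\dM:=\partial$, take $l_3,\widehat{l}_4,\widetilde{l}_4,l_5$ all zero, let $[x,y]$ and $[x,y,z]$ be the $T$-brackets, and put $[x,u]:=\rho(x)(u)=-[u,x]$, $[u,v]:=0$, $[x,y,u]:=\DD(x,y)(u)$, $[u,x,y]:=\theta(x,y)(u)=-[x,u,y]$, $[u,v,w]=[u,v,x]=[x,u,v]:=0$. Then $(i)$, $(ii)$, $(c)$ hold by construction; $(a)$ uses \eqref{eq:cmc01}, \eqref{eq:cmc02} and (LY1) for $V$; $(b)$ uses \eqref{eq:cmc1}; $(d)$ uses \eqref{eq:cmc3} and $(ii)$; the pure-$V_0$ parts of $(e_1)$--$(e_4)$ are (LY3)--(LY6) for $T$, while the mixed parts are the representation axioms read backwards; and $(f_1)$--$(f_4)$ hold since every summand contains one of the vanishing $l$'s.

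\smallskip

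\noindent\textbf{Mutual inverseness and the main obstacle.} Composing the two constructions either way returns the original object essentially by inspection: $V_0$ and its brackets are untouched, $[x,u]$ reproduces $\rho$, $[u,v]$ was already $0$, and the Lie triple brackets on $V$ are recovered from $[\dM u,v]$ and $[\dM u,\dM v,w]$ through \eqref{eq:cmc02}, \eqref{eq:cmc2}; in the other direction the extracted action and boundary map are the ones fed in. The step I expect to be the main obstacle is purely the bookkeeping: verifying that, after the normalisations permitted by $(a)$--$(d)$, the one-$V_1$-argument instances of $(e_1)$--$(e_4)$ match the full list (R31)--(R62) together with the coherence \eqref{eq:cmc3} that makes the trilinear bracket on $T\oplus V$ well defined---i.e. that the strict-$2$-algebra axioms encode exactly this representation data, with nothing to spare and nothing missing. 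Everything else is routine.
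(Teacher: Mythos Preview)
Your proposal is correct and follows essentially the same route as the paper's own proof: set $T=V_0$, $V=V_1$, $\partial=\dM$, extract $\rho,\DD,\theta$ from the mixed brackets, define $[u,v]_V=[\dM u,v]$ and $[u,v,w]_V=[\dM u,\dM v,w]$, and read off the crossed-module axioms from $(a)$--$(d)$ and the $(e_i)$'s, with the converse reversing this dictionary. In fact you are more explicit than the paper in the converse direction and on mutual inverseness; the one small imprecision---identifying ``action'' with the representation axioms (R31)--(R62) alone---matches the paper's own phrasing and is harmless once the remaining semidirect-product checks are derived from the crossed-module identities you have already established.
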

\pf  Let $(V_1,V_0,\dM)$ be a strict Lie triple 2-algebras. Define ${V}=V_1,{T}=V_0$.
Then it is easy to see from definition that $T$ is a Lie triple algebra.
We define the following two operations on ${V}$ by
\begin{eqnarray}
~[u,v]_{{V}}&=&[\dM u, v]=[u,\dM v],\\
~[u,v,w]_{{V}}&=&[\dM u,\dM v,w]=[\dM u,v,\dM w]=[u,\dM v,\dM w],
\end{eqnarray}
It can be verified that $V$ is a Lie triple algebra.
Let $\partial=\dM,$ by Condition (a)--(d) in Definition \ref{lem:2term 3L}, we have
$$\partial[u,v]_{{V}}=\dM [\dM u, v]= [\dM u, \dM v]=[\partial(u),\partial(v)]_{{T}},$$
and
$$\partial[u,v,w]_{{V}}=\dM [\dM u,\dM v,w]=[\dM u,\dM v,\dM w]=[\partial(u),\partial(v),\partial(w)]_{{T}},$$
which implies that $\partial$ is a homomorphism of Lie triple algebras.
Define maps $\rho:{T} \to\End({V})$  ${\DD}:{T}\times T \to\End({V})$ and $\theta:{T}\times T \to \End({V})$ by
$$\rho(x)(v)=[x,v], \quad {\DD}(x,y)(v)=[x,y,v],\quad \theta(x,y)(v)=[v,x,y].$$
By Conditions $(e_1)$--$(e_4)$  in Definition \ref{lem:2term 3L}, we can obtain that $(V,\rho,\DD,\theta)$ is a representation of $T$. Furthermore, we have
\begin{eqnarray*}
\partial(\rho(x)(u))&=&\dM[x,u]=[x,\dM(u)]=[x,\partial(u)]_{{T}},\\
{[u,v]_{V}}&=&[\dM u, v]=[u,\dM v]=\rho(\partial(u)(v)=-\rho(\partial(v)(u)\\
  \partial({\DD}(x,y)(v))&=&\dM ({\DD}(x,y)(v))=\dM [x,y,v]=[x,y,\dM v]= [x,y,\partial(v)]_{{T}},\\
 {[u,v,w]_{{V}}}&=&[\dM u,\dM v,w]=  {\DD}(\partial(u),\partial(v))(w),\\
  {\DD}(x,\partial(u))(v)&=&[x,\dM(u),v]=[x,u,\dM(v)]=-[u,x,\partial(v)]=-\theta(x,\partial(v))(u).
\end{eqnarray*}
Other conditions in \eqref{eq:cmc01}--\eqref{eq:cmc3} can be verified similarly.
Therefore we obtain a crossed module of Lie triple 2-algebras.

Conversely, a crossed module of Lie triple 2-algebras gives rise to a $2$-term $L_\infty$-triple algebra, in which $V_1={V},$  $V_0={T}$, $\dM=\partial$.
The crossed module conditions give various conditions for $2$-term $L_\infty$-triple algebras. We omit the details.
\qed\vspace{3mm}

There are two constructions of Lie triple algebras given in  \cite{KW}. The first one is  the existence of Lie triple algebra on any Leibniz algebra.
Let $({L},\cdot)$ be a Leibniz algebra with a binary operation $\cdot:{L}\times {L}\to {L}$ satisfying
the following Leibniz identity:
$$x\cdot (y\cdot z)=(x\cdot y)\cdot z+y\cdot (x\cdot z).$$
Define bilinear maps $[\cdot,\cdot] : {L} \times {L} \rightarrow {L}$ and trilinear maps $[\cdot,\cdot,\cdot] : {L} \times {L} \times {L}\rightarrow {L}$  by
$$
\begin{aligned}
{[x, y]}& =x\cdot y-y\cdot x,\\
[x, y, z]&=as(y,x,z)-as(x,y,z)=-(x\cdot y)\cdot z
\end{aligned}
$$
where $as(x,y,z)=(x\cdot y)\cdot z-x\cdot (y\cdot z)$ is the associator for $x, y, z \in {L}$. It is proved in \cite{KW} that $({L},[\cdot, \cdot],[\cdot, \cdot, \cdot])$ is a Lie triple algebra.

\begin{defi} \label{def:cmLeib}
A \emph{crossed module} of  Leibniz algebras is a triple of the form $({V}, {L},\varphi)$,
where ${V}$ and ${L}$ are  Leibniz algebras together with an  action of ${L}$ on ${V}$ and  a Leibniz  algebra homomorphism $\varphi \colon {V} \to {L}$ such that the following identities hold:
\begin{itemize}
\item[$\bullet$]{\rm(L1)}\quad $\varphi(x\triangleright u)=x\cdot\varphi(u),\quad \varphi(u\trl x)=\varphi(u)\cdot x,$
\item[$\bullet$]{\rm(L2)}\quad $\varphi(u)\trr v=u\cdot v=u\trl \varphi(v)$,
\end{itemize}
for all $u, v\in {V}, x \in {L}$.
\end{defi}

Now we construct crossed module of Lie triple algebras from crossed module of Leibniz algebras.

\begin{thm}\label{thm:c2}
Let $({V}, {L},\varphi)$ a crossed module of  Leibniz algebras. Then we obtain a crossed module of   Lie triple algebras $\partial=\varphi:{V}\to {L}$ by  representation of ${L}$ on ${V}$ through the following maps
$$
\begin{aligned}
{\rho(x)(u)}& =x\trr u-u\trl x,\\
D(x, y)(u)&=as(y,x,u)-as(x,y,u)=-(x\cdot y)\trr u,\\
\theta(x, y)(u)&=as(x,u,y)-as(u,x,y)=-(u\trl x)\trl y,
\end{aligned}
$$
where $as(x,y,u)=(x\cdot y)\trr u-x\trr (y\trr u)$ is the associator for $x, y\in {L}, u\in {V}$.
\end{thm}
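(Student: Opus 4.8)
The plan is to verify directly that the data $\partial=\varphi\colon V\to L$, together with the maps $\rho, D, \theta$ defined in the statement, satisfies all the requirements of a crossed module of Lie triple algebras, building everything on top of the Kinyon--Weinstein construction recalled above. First I would recall that by that construction both $L$ and $V$ carry Lie triple algebra structures $[x,y]=x\cdot y-y\cdot x$, $[x,y,z]=-(x\cdot y)\cdot z$, and that $\varphi$ is automatically a homomorphism of Lie triple algebras since it is a homomorphism of Leibniz algebras. The remaining work splits into two parts: (i) showing that $(\rho,D,\theta)$ is an action of the Lie triple algebra $L$ on the Lie triple algebra $V$, i.e. that $L\oplus V$ with the formulas in the definition of ``action'' is again a Lie triple algebra; and (ii) verifying the five crossed module identities \eqref{eq:cmc01}--\eqref{eq:cmc3}.

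For part (i) the cleanest route is to observe that the Leibniz action of $L$ on $V$ makes $L\oplus V$ a Leibniz algebra (the semidirect product $L\ltimes V$), and then to apply the Kinyon--Weinstein functor to that Leibniz algebra. One checks that the induced Lie triple bracket on $L\oplus V$, namely $[\xi,\eta]=\xi\cdot\eta-\eta\cdot\xi$ and $[\xi,\eta,\zeta]=-(\xi\cdot\eta)\cdot\zeta$ with $\cdot$ the semidirect product multiplication, restricts on the $L$-component to the given brackets on $L$, restricts on the $V$-component to the given brackets on $V$, and reproduces exactly the formulas $\rho(x)(v)=x\triangleright v-v\triangleleft x$, $D(x,y)(w)=-(x\cdot y)\triangleright w$, $\theta(x,y)(u)=-(u\triangleleft x)\triangleleft y$ in the mixed components. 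Since the semidirect product of Leibniz algebras is a Leibniz algebra, its Kinyon--Weinstein image is automatically a Lie triple algebra, so $L\oplus V \cong L\ltimes_{\rho,\theta,D} V$ as Lie triple algebras; this is precisely the assertion that $(\rho,D,\theta)$ is an action. This is the conceptually economical step and it sidesteps a direct verification of (LY1)--(LY6) for the mixed structure.

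For part (ii) each crossed module identity is obtained by expanding the definitions and invoking the Leibniz crossed module axioms (L1) and (L2). For instance $\partial(\rho(x)(u))=\varphi(x\triangleright u-u\triangleleft x)=x\cdot\varphi(u)-\varphi(u)\cdot x=[x,\partial u]_T$ by (L1); $\partial(D(x,y)(u))=\varphi(-(x\cdot y)\triangleright u)=-(x\cdot y)\cdot\varphi(u)=[x,y,\partial u]_T$ by (L1) and the definition of the trilinear bracket on $L$; similarly for $\theta$; the identities \eqref{eq:cmc02} and \eqref{eq:cmc2} follow from (L2) applied to $u\cdot v=\varphi(u)\triangleright v = u\triangleleft\varphi(v)$ and the corresponding associator expressions; and \eqref{eq:cmc3} amounts to the identity $as(x,\varphi(u),v)-as(\varphi(u),x,v) = -\big(as(x,v,\varphi(u))-as(v,x,\varphi(u))\big)$ after using (L2) to move $\varphi$ across. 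I expect the main obstacle to be precisely this last identity \eqref{eq:cmc3}: it is the one that is genuinely ``quadratic'' in the action and requires a careful bookkeeping of associators, using both (L2) and the Leibniz identity for the action, rather than a one-line substitution; I would carry it out last and in full detail, writing out $D(x,\partial u)(v)+\theta(x,\partial v)(u)$ in terms of the action and collapsing it with the Leibniz and crossed-module relations. The remaining identities are routine.
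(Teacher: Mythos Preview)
Your part (ii) --- the direct verification of the crossed module identities \eqref{eq:cmc01}--\eqref{eq:cmc3} by expanding the definitions and invoking (L1), (L2) --- is exactly what the paper's proof does, and your instinct that \eqref{eq:cmc3} is the one requiring the most bookkeeping is borne out: the paper dispatches it last via $(x\cdot\varphi(u))\trr v=\varphi(x\trr u)\trr v=(x\trr u)\cdot v$ together with the Leibniz square-zero relation $(a\cdot b+b\cdot a)\cdot c=0$.

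Your part (i), however, goes beyond what the paper proves (the paper simply does not check that $(\rho,D,\theta)$ is an action in the sense of its own definition), and the shortcut you propose has a genuine gap. Applying the Kinyon--Weinstein construction to the Leibniz semidirect product $L\ltimes V$ does yield a Lie triple algebra on $L\oplus V$, but its trilinear bracket is \emph{not} the one in the paper's action formula. Expanding $-\big((x+u)\cdot(y+v)\big)\cdot(z+w)$ produces, in addition to the five expected terms $[x,y,z]_L$, $D(x,y)(w)$, $\theta(y,z)(u)$, $-\theta(x,z)(v)$, $[u,v,w]_V$, three further cross terms of bidegree $(1,2)$ in $(L,V)$:
\[
-(u\cdot v)\trl z,\qquad -(x\trr v)\cdot w,\qquad -(u\trl y)\cdot w,
\]
and these are nonzero in general (take $V=L$, $\varphi=\mathrm{id}$, actions given by multiplication). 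Hence the KW image of $L\ltimes V$ is a \emph{different} Lie triple algebra from $L\ltimes_{\rho,\theta,D}V$, and the argument does not show that the latter satisfies (LY1)--(LY6). If you wish to retain part (i), you will need either a direct check of the Lie triple axioms for the action semidirect product, or a separate argument that removing those three extra terms preserves the axioms --- neither is automatic from the functoriality of KW.
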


\proof We verify the conditions in \eqref{eq:cmc01}--\eqref{eq:cmc3} as follows.
Firstly, we have
\begin{eqnarray*}
\partial(\rho(x)(u))&=&\vphi(x\trr u-u\trl x)=x\cdot \vphi(u)-\vphi(u)\cdot x= [x,\partial(u)]_{{{L}}},
\end{eqnarray*}
\begin{eqnarray*}
\partial(\DD(x,y)(u))
&=&\vphi((y\cdot x)\trr v-y\trr (x\trr u)-(x\cdot y)\trr u+x\trr (y\trr u))\\
&=&(y\cdot x)\trr \vphi(u)-y\trr (x\trr  \vphi(u))-(x\cdot y)\trr  \vphi(u)+x\trr (y\trr  \vphi(u)))\\
&=&[x,y,\vphi(u)]_{{{L}}}=[x,y,\partial(u)]_{{{L}}},
\end{eqnarray*}
\begin{eqnarray*}
 \partial(\theta(x,y)(u))&=&-\vphi((u\trl x)\trl y)=-\vphi(u\trl x)\cdot y\\
  &=&-(\vphi (u)\cdot x)\cdot y=[\partial(u),x,y]_{{{L}}}.
\end{eqnarray*}
Next, we compute
\begin{eqnarray*}
{[u,v]}_{{V}}&=&u\cdot v-v\cdot u=\partial(u)\trr v-v\trl\vphi(u)=\rho(\partial(u))(v),
\end{eqnarray*}
\begin{eqnarray*}
{[u,v,w]}_{{V}}&=&-(u\cdot v)\cdot w=-\varphi(\varphi(u)\trr v)\trr w\\
&=& -(\varphi(u)\cdot \varphi(v))\trr w=-(\partial(u)\cdot \partial(v))\trr w\\
&=&\DD(\partial(u),\partial(v))(w),
\end{eqnarray*}
Similarly, we get
\begin{eqnarray*}
{[u,v,w]}_{{V}}&=&-(u\cdot v)\cdot w= -(u\trl\varphi(v))\trl \varphi(w)\\
&=&-(u\trl \partial(v))\trl \partial(w)=\theta(\partial(v),\partial(w))(u).
\end{eqnarray*}
Finally, we have
\begin{eqnarray*}
\DD(x,\partial(u))(v)&=&-(x\cdot  \partial(u))\trr v=-\theta(x,\partial(v))(u).
\end{eqnarray*}
Thus we obtain a crossed module of Lie triple algebras from crossed module of Leibniz algebras.
\qed

The second construction of Lie triple algebras is the one from  reductive Lie algebras.
Let $\mathfrak{g}$ be a Lie algebra with a reductive decomposition $\mathfrak{g}=\mathfrak{h} \oplus \mathfrak{m}$, i.e. $[\mathfrak{h}, \mathfrak{h}] \subseteq \mathfrak{h}$ and $[\mathfrak{h}, \mathfrak{m}] \subseteq \mathfrak{m}$. On $\mathfrak{m}$, define bilinear map $[\cdot,\cdot]_\mathfrak{m}: \mathfrak{m} \times \mathfrak{m} \rightarrow \mathfrak{m}$ and trilinear map  $[\cdot,\cdot,\cdot]_\mathfrak{m}: \mathfrak{m} \times \mathfrak{m} \times \mathfrak{m} \rightarrow   \mathfrak{m}$ by the projections of the Lie bracket:
$$
\begin{aligned}
{[x, y]}_\mathfrak{m} :=\pi_{\mathfrak{m}}([x, y]_{\mathfrak{g}}), \quad{[x, y, z]}_\mathfrak{m}:=[\pi_{\mathfrak{h}}([x, y]_{\mathfrak{g}}), z]
\end{aligned}
$$
where $\pi_{\mathfrak{m}}:\g\to \m, \pi_{\mathfrak{h}}:\g\to \h$ are the projection map for $x, y, z \in \mathfrak{m}$. It is proved in \cite{KW} that $(\mathfrak{m},{[\cdot, \cdot]}_\m,{[\cdot, \cdot, \cdot]}_\m)$ is a Lie triple algebra.

\begin{defi} \label{def:cmlie}
A \emph{crossed module} of  Lie algebras is a triple $({V}, \mathfrak{g},\varphi)$,
where ${V}$ and $\mathfrak{g}$ are Lie algebras together with an  action of $\mathfrak{g}$ on ${V}$,   $\varphi \colon {V} \to \mathfrak{g}$ is a Lie  algebra homomorphism such that the following identities hold:
\begin{itemize}
\item[$\bullet$]{\rm(C1)}\quad $\varphi(x\triangleright u)=[x,\varphi(u)]_{\g},$
\item[$\bullet$]{\rm(C2)}\quad $\varphi(u)\trr v=[u, v]_{V}$.
\end{itemize}
for all $u, v\in {V}, x \in \mathfrak{g}$.
\end{defi}

\begin{defi} \label{def:cmlie}
A \emph{reductive crossed module} of  Lie algebras is a crossed module of Lie algebras $({V}, \mathfrak{g},\varphi)$,
where ${V}=V_1\oplus V_2$ and $\mathfrak{g}=\mathfrak{h}\oplus\mathfrak{m}$ are reductive Lie algebras such that $\varphi$ is decomposed into
$\varphi=\varphi_1\oplus \varphi_2$: $V\xrightarrow{\vphi} \g=(V_1\xrightarrow{\vphi_1}  \h)\oplus (V_1\xrightarrow{\vphi_2} \m)$
and the following identities hold:
\begin{itemize}
\item[$\bullet$]{\rm(R1)}\quad $({V_1}, \mathfrak{h},\varphi_1)$ is a subcrossed module,
\item[$\bullet$]{\rm(R2)}\quad $ \mathfrak{h}\trr V_2\subset V_2$,
\end{itemize}
for all $x, y\in \mathfrak{m}, u, v\in {V_2}$.
\end{defi}

\begin{thm}\label{thm:c2}
Let $({V}, \mathfrak{g},\varphi)$ a reductive crossed module of  Lie algebras. Then we get a crossed module of   Lie triple algebras $\partial=\varphi_2:{V_2}\to \mathfrak{m}$ by  representation of $\mathfrak{m}$ on ${V_2}$ through the following maps
$$
\begin{aligned}
{\rho(x)(u)}& =\pi_{V_2}(x\trr u),\quad D(x, y)(u)&=\pi_{\h}([x,y]_{\g})\trr u,\\
\theta(x, y)(u)&=\pi_{{V_2}}(y\trr\pi_{{V_2}}(x\trr u)),
\end{aligned}
$$
for all $x, y\in \mathfrak{m}, u, v\in {V_2}$.
\end{thm}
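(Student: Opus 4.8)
The plan is to realize the desired crossed module of Lie triple algebras inside a single Lie algebra carrying a compatible reductive decomposition; this produces the two Lie triple algebra structures and the action for free, leaving only the crossed module identities to check as in the Leibniz construction above. Since $(V,\mathfrak g,\varphi)$ is a crossed module of Lie algebras, the action of $\mathfrak g$ on $V$ makes $\widetilde{\mathfrak g}:=\mathfrak g\ltimes V$ a Lie algebra, with $[x+u,y+v]_{\widetilde{\mathfrak g}}=[x,y]_{\mathfrak g}+x\triangleright v-y\triangleright u+[u,v]_{V}$. The first step is to check that $\widetilde{\mathfrak g}=\widetilde{\mathfrak h}\oplus\widetilde{\mathfrak m}$, with $\widetilde{\mathfrak h}:=\mathfrak h\oplus V_1$ and $\widetilde{\mathfrak m}:=\mathfrak m\oplus V_2$, is a reductive decomposition. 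The inclusion $[\widetilde{\mathfrak h},\widetilde{\mathfrak h}]\subseteq\widetilde{\mathfrak h}$ follows from $\mathfrak h$ being a subalgebra, from $(V_1,\mathfrak h,\varphi_1)$ being a subcrossed module (so $\mathfrak h\triangleright V_1\subseteq V_1$), and from reductivity of $V$ (so $[V_1,V_1]_V\subseteq V_1$); the inclusion $[\widetilde{\mathfrak h},\widetilde{\mathfrak m}]\subseteq\widetilde{\mathfrak m}$ follows from $[\mathfrak h,\mathfrak m]_{\mathfrak g}\subseteq\mathfrak m$, from (R2) $\mathfrak h\triangleright V_2\subseteq V_2$, from $[V_1,V_2]_V\subseteq V_2$, and from the grading compatibility $\mathfrak m\triangleright V_1\subseteq V_2$ of the action that a reductive crossed module carries. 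Granting this, the Kinyon--Weinstein construction \cite{KW} equips $\widetilde{\mathfrak m}$ with a Lie triple algebra structure, $[a,b]_{\widetilde{\mathfrak m}}=\pi_{\widetilde{\mathfrak m}}([a,b]_{\widetilde{\mathfrak g}})$ and $[a,b,c]_{\widetilde{\mathfrak m}}=[\pi_{\widetilde{\mathfrak h}}([a,b]_{\widetilde{\mathfrak g}}),c]_{\widetilde{\mathfrak g}}$.

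Next I would read off the components. Restricting the brackets of $\widetilde{\mathfrak m}$ to $\mathfrak m$ recovers the Lie triple algebra attached to $\mathfrak g=\mathfrak h\oplus\mathfrak m$, restricting them to $V_2$ recovers the Lie triple algebra attached to $V=V_1\oplus V_2$, and $V_2$ is an ideal of $\widetilde{\mathfrak m}$; hence the operations $[x,u]_{\widetilde{\mathfrak m}}$, $[x,y,u]_{\widetilde{\mathfrak m}}$, $[u,x,y]_{\widetilde{\mathfrak m}}$ for $x,y\in\mathfrak m$, $u\in V_2$ constitute an action of $\mathfrak m$ on $V_2$, so the representation axioms come for free. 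A one-line projection computation identifies this action with $(\rho,\DD,\theta)$ as stated: $[x,u]_{\widetilde{\mathfrak m}}=\pi_{V_2}(x\triangleright u)=\rho(x)(u)$, $[x,y,u]_{\widetilde{\mathfrak m}}=\pi_{\mathfrak h}([x,y]_{\mathfrak g})\triangleright u=\DD(x,y)(u)$, and $[u,x,y]_{\widetilde{\mathfrak m}}$ yields $\theta(x,y)(u)$ after expanding the nested bracket. Since $\varphi$ is a Lie algebra homomorphism carrying $V_1$ into $\mathfrak h$ and $V_2$ into $\mathfrak m$, one has $\varphi_2\circ\pi_{V_2}=\pi_{\mathfrak m}\circ\varphi$ on $V$; projecting $\varphi([a,b]_V)=[\varphi a,\varphi b]_{\mathfrak g}$ and the analogous ternary relation onto $\widetilde{\mathfrak m}$ then shows that $\partial:=\varphi_2:V_2\to\mathfrak m$ is a homomorphism of Lie triple algebras.

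It remains to verify the crossed module identities \eqref{eq:cmc01}--\eqref{eq:cmc3}. Each one is a direct consequence of the two crossed module relations for $\varphi$, namely $\varphi(x\triangleright u)=[x,\varphi(u)]_{\mathfrak g}$ and $\varphi(u)\triangleright v=[u,v]_V$, together with the projection identities above. For example \eqref{eq:cmc01} reads
$$\partial(\rho(x)(u))=\varphi_2\pi_{V_2}(x\triangleright u)=\pi_{\mathfrak m}\varphi(x\triangleright u)=\pi_{\mathfrak m}[x,\varphi(u)]_{\mathfrak g}=[x,\partial(u)]_{\mathfrak m},$$
and \eqref{eq:cmc02} follows from $\varphi(u)\triangleright v=[u,v]_V$ upon applying $\pi_{V_2}$; the two identities of \eqref{eq:cmc1}, the two of \eqref{eq:cmc2}, and \eqref{eq:cmc3} are obtained the same way after substituting the explicit forms of $\DD$ and $\theta$ and separating $V_1$-components from $V_2$-components. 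The step I expect to be the main obstacle is the very first one --- establishing that $\widetilde{\mathfrak g}=\widetilde{\mathfrak h}\oplus\widetilde{\mathfrak m}$ is reductive, since this is exactly where one must invoke, and make precise, the grading compatibility $\mathfrak m\triangleright V_1\subseteq V_2$ of the action in a reductive crossed module; everything afterwards is bookkeeping with the four projections $\pi_{\mathfrak h},\pi_{\mathfrak m},\pi_{V_1},\pi_{V_2}$. Alternatively, one can bypass the ambient algebra $\widetilde{\mathfrak g}$ altogether and verify the representation axioms for $(\rho,\DD,\theta)$ and the relations \eqref{eq:cmc01}--\eqref{eq:cmc3} directly from the definitions, exactly as in the proof of the Leibniz construction above, at the cost of a longer computation.
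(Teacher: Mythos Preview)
Your primary approach---building the semidirect product $\widetilde{\mathfrak g}=\mathfrak g\ltimes V$, equipping it with the splitting $\widetilde{\mathfrak h}\oplus\widetilde{\mathfrak m}$, and then reading off the two Lie triple structures, the action, and the representation axioms from the single Kinyon--Weinstein algebra $\widetilde{\mathfrak m}$---is genuinely different from what the paper does. The paper proceeds exactly by your fallback: it simply verifies \eqref{eq:cmc01}--\eqref{eq:cmc2} line by line using (C1), (C2), and the projection identity $\varphi_2\circ\pi_{V_2}=\pi_{\mathfrak m}\circ\varphi$, never constructing any ambient algebra and never separately checking the representation axioms for $(\rho,\DD,\theta)$. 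Your route is more conceptual and, if it goes through, buys you those representation axioms for free; the paper's route is shorter but leaves that part implicit.

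The obstacle you flag, however, is a real gap in your primary route and not just a technicality to be ``made precise''. The inclusion $\mathfrak m\triangleright V_1\subseteq V_2$ is \emph{not} among the axioms (R1)--(R2) of a reductive crossed module as the paper defines it, and it does not follow from (C1): for $x\in\mathfrak m$, $u\in V_1$ one gets $\varphi(x\triangleright u)=[x,\varphi_1(u)]_{\mathfrak g}\in\mathfrak m$, hence only that $\varphi_1$ annihilates $\pi_{V_1}(x\triangleright u)$, not that this projection vanishes. So without adding this grading condition to the definition, the decomposition $\widetilde{\mathfrak g}=\widetilde{\mathfrak h}\oplus\widetilde{\mathfrak m}$ need not be reductive and the ambient-algebra argument stalls. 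Your alternative of direct verification is precisely the paper's proof, so you do land on the correct argument either way.
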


\proof We verify the conditions in \eqref{eq:cmc01}--\eqref{eq:cmc3} as follows.
For \eqref{eq:cmc01}, we have
\begin{eqnarray*}
\partial(\rho(x)(u))&=&\vphi_2(\pi_{V_2}(x\trr u))=\pi_{\m}\vphi(x\trr u)\\
&=&\pi_{\m}([x, \vphi_2(u)]_\g)=[x,\vphi_2(u)]_{{\m}}\\
&=& [x,\partial(u)]_{{\m}}.
\end{eqnarray*}
A direct computation shows that
\begin{eqnarray*}
\partial(\DD(x,y)(u))&=&\vphi_2(\pi_{\h}([x,y])\trr u)=[\pi_{\h}([x,y]),\vphi_2(u)]_{V_2}\\
&=&[x,y,\partial(u)]_{V_2},
\end{eqnarray*}
\begin{eqnarray*}
\partial(\theta(x,y)(u))&=&\vphi_2(\pi_{{V_2}}(y\trr\pi_{{V_2}}(x\trr u)))=[y,\vphi_2(\pi_{{V_2}}(x\trr u))]\\
&=&[y,[x, \vphi_2(u))]=[\pi_{\h}([\vphi_2(u),x]), y]_{V_2}\\
&=&[\partial(u),x,y]_{V_2}.
\end{eqnarray*}
Finally, we get
\begin{eqnarray*}
{[u,v]}_{{V_2}}&=&\pi_{V_2}([u,v]_V)=\pi_{V_2}(\varphi_2(u)\trr v)=\rho(\partial(u))(v),
\end{eqnarray*}
\begin{eqnarray*}
{[u,v,w]}_{V_2}&=&[\pi_{V_1}([u,v]_V), w]=\pi_{\h}([\vphi_2(u),\vphi_2(v)])\trr w\\
&=&\DD(\partial(u),\partial(v))(w)=\pi_{{V_2}}(\vphi_2(v)\trr\pi_{{V_2}}(\vphi_2(w)\trr u))\\
&=&\theta(\partial(v),\partial(w))(u).
\end{eqnarray*}
Thus we obtain a crossed module of Lie triple algebras from {reductive crossed module} of  Lie algebras.
\qed

\begin{exa}
For omni-Lie algebra $\pi:\gl(V)\oplus V\to \gl(V)$, the associated Lie triple algebra on $\gl(V)\oplus V$ is given by
$$
\begin{aligned}
{[(A, x),(B, y)]}&=(2[A, B], Ay-Bx),\\
[(A, x),(B, y),(C, z)]&=-([[A, B], C],[A, B] z),
\end{aligned}
$$
and the action of $\gl(V)$ on $\gl(V)\oplus V$ is given by
$$
\begin{aligned}
{\rho(A)(B, y)}& =([A,B], Ay),\quad \theta(A, B)(C,z)&=0,\\
D(A, B)(C,z)&=-([[A,B],C], [A,B]z).
\end{aligned}
$$
\end{exa}

\begin{exa}
%
%
%
%
%
%
For the Courant algebroid $\pi:TM\oplus T^*M\to TM$, the associated Lie triple algebra on $\mathcal{X}(M)\oplus \Omega(M)$ is given by
$$
\begin{aligned}
{[(X_1, \xi_1), (X_2, \xi_2)]}&=\left(2[X_1, X_2], \mathcal{L}_{X_1} \xi_2- \mathcal{L}_{X_2} \xi_1-i_{X_2} d\xi_1+i_{X_1} d\xi_2\right)\\
\left\{\left(X_1, \xi_1\right),\left(X_2, \xi_2\right),\left(X_3, \xi_3\right)\right\}&=-\left(\left[\left[X_1, X_2\right], X_3\right], \mathcal{L}_{\left[X_1, X_2\right]} \xi_3-i_{X_3} d\left(\mathcal{L}_{X_1} \xi_2-\mathcal{L}_{X_2} \xi_1\right)\right)
\end{aligned}
$$
and the action of $\mathcal{X}(M)$ on $\mathcal{X}(M)\oplus \Omega(M)$ is given by
$$
\begin{aligned}
{\rho(X_1)(X_2, \xi)}& =([X_1,X_2], \mathcal{L}_{X_1}\xi),\quad \theta(X_1,X_2)(X_3,\xi)&=0,\\
D(X_1,X_2)(X_3,\xi)&=-([[X_1,X_2],X_3], \mathcal{L}_{[X_1,X_2]}\xi).
\end{aligned}
$$
\end{exa}


It is well known that crossed module extensions of associative algebras and Lie algebras can be classified by the  third cohomology group.
Now we are going to study crossed module extensions of Lie triple algebras and show that they are related to our new cohomology group of Lie triple algebras.

\begin{defi}
 Let $T$ be a Lie triple algebra and   $({V},\rho,\DD,\theta)$ be a representation of $T$. A crossed module extension of  $T$ by $M$ is a short exact sequence
\begin{equation}\label{diagram:exact}
 \xymatrix{
   0  \ar[r]^{} & {M} \ar[r]^{i} & V \ar[r]^{\partial} & S \ar[r]^{\pi} &  T  \ar[r]^{} & 0 \\
  }
\end{equation}
such that $\partial:V\rightarrow S$ is a crossed module and $M\cong \Ker\partial$, $T\cong \mathrm{coKer} \partial$.
\end{defi}

\begin{defi}
 Two crossed module extensions of Lie triple algebras
$\partial:V\rightarrow S$  and ${\partial'}: {V'}\rightarrow {S'}$  are equivalent,
 if there exists a Lie triple algebras homomorphism $\phi:V\to {V'}$ and $\psi: S\to {S'}$  such that the following diagram commutes
 \begin{equation}
\xymatrix{
    0\ar[r]^{} & M  \ar@{=}[d]_{} \ar[r]^{i} & V  \ar[d]_{\phi } \ar[r]^{\partial } & S  \ar[d]_{\psi}  \ar[r]^{\pi } & T \ar@{=}[d]_{} \ar[r]^{} & 0  \\
   0\ar[r]^{} & M  \ar[r]^{i'} &{V}'  \ar[r]^{{\partial}' } & {S}'  \ar[r]^{\pi' } & T  \ar[r]^{} & 0.}
   \end{equation}
The set of equivalent classes of crossed extensions of $T$ by $M$ is denoted by $\mathbf{CExt}( T,{M})$.
\end{defi}

The main result of this section is the following Theorem \ref{mainthm3}.

\begin{thm}\label{mainthm3}
 Let $T$ be a Lie triple algebra and $M$ be a representation of $T$.
Then there is a canonical bijective map:
\begin{equation*}
\Phi:\mathbf{CExt}(T,M)\rightarrow H^{(3,4,4,5)}(T, M).
\end{equation*}
\end{thm}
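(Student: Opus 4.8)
The plan is to identify the extension with a strict Lie triple $2$-algebra, transport its structure onto the minimal complex $M\xrightarrow{0}T$ to obtain a \emph{skeletal} Lie triple $2$-algebra, read off the resulting $(3,4,4,5)$-cocycle, and then prove that its class does not depend on the choices made. The delicate part will be the last step.

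\emph{Construction.} By the correspondence between crossed modules of Lie triple algebras and strict Lie triple $2$-algebras, the map $\partial\colon V\to S$ underlying \eqref{diagram:exact} is a strict Lie triple $2$-algebra, hence by Theorem \ref{mainthm2} a $2$-term $L_\infty$-triple algebra $\huaW$ with vanishing higher maps, all of whose structure comes from the Lie triple algebras $S,V$ and the action $(\rho,\DD,\theta)$ via \eqref{eq:cmc01}--\eqref{eq:cmc3}. Exactness of \eqref{diagram:exact} says precisely that the underlying complex $V\xrightarrow{\partial}S$ has homology $M=\Ker\partial$ in degree $1$ and $T=\mathrm{coKer}\,\partial$ in degree $0$, and \eqref{eq:cmc01}--\eqref{eq:cmc3} show that the induced Lie triple algebra on $H_0$ is $T$ and the induced representation on $H_1$ is the given $(\rho,\DD,\theta)$. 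I would then choose linear sections $s\colon T\to S$ of $\pi$ and $\sigma\colon\Img\partial\to V$ of $\partial$, with complementary projections $p\colon S\to T$, $\kappa\colon V\to M$ and a contracting homotopy $h\colon S\to V$ ($h|_{s(T)}=0$, $h|_{\Img\partial}=\sigma$); these exhibit $M\xrightarrow{0}T$ as a deformation retract of $V\xrightarrow{\partial}S$. Transporting the $L_\infty$-triple structure of $\huaW$ along this retraction gives a $2$-term $L_\infty$-triple structure on $M\xrightarrow{0}T$ whose differential is $0$, whose brackets are those of $T$ and of the representation $M$, and whose higher maps $(l_3,\widehat{l}_4,\widetilde{l}_4,l_5)$ are the tree combinations of the brackets of $\huaW$ evaluated on the $s(x_i)$, with internal edges resolved by $h$ (equivalently, by $\sigma$ applied to the section defects $\nu(x_1,x_2)=[s x_1,s x_2]_S-s[x_1,x_2]_T$ and $\omega(x_1,x_2,x_3)=[s x_1,s x_2,s x_3]_S-s[x_1,x_2,x_3]_T$) and the output projected by $\kappa$ into $M$. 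These four maps lie in $C^{(3,4,4,5)}(T,M)$; one can also write them as explicit $M$-valued expressions in $\nu,\omega$ and the brackets of $V$ and check, via \eqref{eq:cmc01}--\eqref{eq:cmc3} and the axioms (LY1)--(LY6) of $S$, that $\partial$ annihilates each of them.

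\emph{Cocycle.} Since the transported differential is $0$, the quadruple $(M,T,0,l_3,\widehat{l}_4,\widetilde{l}_4,l_5)$ is a skeletal $2$-term $L_\infty$-triple algebra, so by the correspondence between skeletal Lie triple $2$-algebras and $(3,4,4,5)$-cocycles it is \emph{automatic} that $(l_3,\widehat{l}_4,\widetilde{l}_4,l_5)\in Z^{(3,4,4,5)}(T,M)$. A direct proof of $\Delta_3(l_3,\widehat{l}_4,\widetilde{l}_4,l_5)=0$ would instead repeat the computation of Theorem \ref{mainthm1} inside $V$, keeping track of $\nu,\omega$: the failure of the pulled-back operators $\rho(s(\cdot)),\DD(s(\cdot),s(\cdot)),\theta(s(\cdot),s(\cdot))$ to satisfy $(R31)$--$(R62)$ is controlled by $\nu,\omega$ because the $S$-action on $V$ is a genuine representation, and those error terms cancel against the Lie triple axioms of $V$ together with \eqref{eq:cmc01}--\eqref{eq:cmc3}. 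Either way we obtain a class $\xi_{s,\sigma}:=\big[(l_3,\widehat{l}_4,\widetilde{l}_4,l_5)\big]\in H^{(3,4,4,5)}(T,M)$ attached to the extension \eqref{diagram:exact}.

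\emph{Independence of choices.} Two retraction data $(s,\sigma),(s',\sigma')$ are related by a chain homotopy, and transport along homotopic retractions yields $L_\infty$-isomorphic transported structures via an isomorphism that is the identity on $M\xrightarrow{0}T$; for skeletal structures with a fixed Lie triple algebra and representation, the $\phi_2,\phi_3$-components of such an isomorphism form a $(2,3)$-cochain $(\nu_0,\omega_0)\in C^2(T,M)\times C^3(T,M)$ with $(l_3',\widehat{l}_4',\widetilde{l}_4',l_5')-(l_3,\widehat{l}_4,\widetilde{l}_4,l_5)=\Delta_2(\nu_0,\omega_0)\in B^{(3,4,4,5)}(T,M)$; equivalently, replacing $s$ by $s+\partial\beta$ and adjusting $\sigma$ by an $M$-valued term changes $(l_3,\widehat{l}_4,\widetilde{l}_4,l_5)$ by a $(3,4,4,5)$-coboundary. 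Moreover an equivalence $(\phi,\psi)$ of extensions induces an isomorphism of the associated strict Lie triple $2$-algebras covering $\mathrm{id}_T$ and restricting to $\mathrm{id}_M$ on kernels, hence matching retractions and cohomologous cocycles. Therefore $\xi:=\xi_{s,\sigma}$ is independent of all choices and defines the desired canonical map $\mathbf{CExt}(T,M)\to H^{(3,4,4,5)}(T,M)$. The principal obstacle is exactly this last step — that changing the transfer data (equivalently, the sections) alters the $(3,4,4,5)$-cocycle only by a coboundary, which amounts to promoting the skeletal correspondence to the level of equivalences and carries the bulk of the bookkeeping; writing out the transported higher maps explicitly and verifying they take values in $M$ with the asserted values is a close second.
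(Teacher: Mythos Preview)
Your construction and the paper's are built on the same raw material: choose sections $s\colon T\to S$ and $q\colon\Img\partial\to V$ (your $\sigma$), form the defects $\nu(x,y)=q([sx,sy]-s[x,y])$ and $\omega(x,y,z)=q([sx,sy,sz]-s[x,y,z])$, and assemble from them a $(3,4,4,5)$-cochain with values in $M$. The paper, however, proceeds \emph{directly}: it writes down explicit formulas for $\theta^3,\widehat\theta^4,\widetilde\theta^4,\theta^5$ in terms of $\nu,\omega$ and the brackets, verifies by hand (using \eqref{eq:cmc01}--\eqref{eq:cmc3} and the $S$-axioms) that each lands in $\ker\partial=M$, asserts the cocycle condition, and checks section-independence by comparing $\Theta_s-\Theta_{\bar s}$ against the $(2,3)$-cochain built from $h$ with $s-\bar s=\partial h$.

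Your route is more conceptual: regard the extension as a strict $2$-term $L_\infty$-triple algebra, transfer to the homology $M\xrightarrow{0}T$, and invoke the skeletal/cocycle dictionary so that both the cocycle condition and the coboundary-independence come for free. This is the homotopy-transfer philosophy, and if it applies here it genuinely subsumes all of the paper's computations; your description of the transferred higher maps in terms of tree sums with edges resolved by $h$ would reproduce the paper's explicit formulas.

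The gap is that a homotopy transfer theorem for $2$-term $L_\infty$-triple algebras is never established in the paper, and one cannot simply cite the classical HTT: these are newly introduced structures mixing binary and ternary generating operations with four separate higher maps $(l_3,\widehat l_4,\widetilde l_4,l_5)$, not algebras over a single (pr)operad for which transfer is already known. So the two sentences ``transporting \dots\ gives a $2$-term $L_\infty$-triple structure'' and ``transport along homotopic retractions yields $L_\infty$-isomorphic transported structures'' are precisely the statements that carry all the content, and proving them unpacks into the direct verifications the paper performs. Your outline is sound, and you even flag the explicit alternative; just be aware that, as written, you are deferring the actual work to a transfer theorem you would first have to prove for this class of algebras.
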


\begin{proof}
The proof of this  Theorem \ref{mainthm3} will be splitting  into the following steps.

\medskip
Step 1. First,  for a crossed module extension, we define  a $(3,4,4,5)$-cohomology class in the $H^{(3,4,4,5)}(T, M)$ as the follows.
\medskip

Given a crossed module extension of $T$ by ${V}$, we choose linear sections $s : T \rightarrow S$ with $\pi s = \text{id}_V$ and $q : \text{Im}(\partial) \rightarrow V$ with $\partial q = \text{id}$. 


Since $\pi$ is a Lie triple algebra homomorphism, for all $x , y, z\in  {T} ,$ we have
\begin{align*}
\pi ([s(x), s(y)] - s[x,y]) =[\pi s(x), \pi  s(y)] - \pi s[x,y]=[x,y]-[x,y]= 0,
\end{align*}
and
\begin{align*}
\pi ([s(x), s(y),s(z)] - s[x,y,z]) =&[\pi s(x), \pi  s(y), \pi  s(z)] - \pi s[x,y,z]\\
=&[x,y,z]-[x,y,z]= 0.
\end{align*}
This shows that $[s(x), s(y)] - s[x,y]\in\ker(\pi) = \text{Im} (\partial)$ and $[s(x), s(y),s(z)] - s[x,y,z]\in\ker(\pi) = \text{Im} (\partial)$.
Take
$$\nu(x_1,x_2) = q ([s(x_1), s(x_2)] - s[x_1,x_2]) \in V,$$
and
$$\omega(x_1,x_2, x_3) = q ([s(x_1), s(x_2), s(x_3)] - s[x_1,x_2, x_3]) \in V.$$

Define  maps $\Theta_{\mathcal{E}, s, q}=(\theta^3_{\mathcal{E}, s, q},\widehat{\theta}^4_{\mathcal{E}, s, q},\widetilde{\theta}^4_{\mathcal{E}, s, q},\theta^5_{\mathcal{E}, s, q}) $:
$$\theta^3_{\mathcal{E}, s, q}: {T} ^{\otimes 3} \rightarrow M, \quad\widehat{\theta}^4_{\mathcal{E}, s, q}: {T} ^{\otimes 4} \rightarrow M,
\quad\widetilde{\theta}^4_{\mathcal{E}, s, q}: {T} ^{\otimes 4} \rightarrow M,\quad \theta^5_{\mathcal{E}, s, q} :  {T} ^{\otimes 5} \rightarrow M$$
by
\begin{eqnarray*}
&&\theta^3_{\mathcal{E}, s, q} (x_1,x_2, x_3) \\
&=&\omega(x_1,x_2, x_3)+c.p.-[s(x_1),\nu(x_2,x_3)]-c.p.+\nu([x_1,x_2],x_3)+c.p.,\\[1em]
&&\widehat{\theta}^4_{\mathcal{E}, s, q}( x_1, x_2,x_3, y_1)\\
&=&[s(x_1), \nu(x_2,x_3), s(y_1)]+c.p.-\omega([x_1,x_2], x_3, y_1)-c.p.,\\[1em]
&&\widetilde{\theta}^4_{\mathcal{E}, s, q}( x_1, x_2,y_1, y_2)\\
&=& \omega( x_1, x_2,[y_1, y_2])-[s(y_1),\omega(x_1,x_2,y_2)]+[s(y_2),\omega(x_1,x_2,y_1)]\\
&&+[s(x_1), s(x_2),\nu(y_1, y_2)]-\nu([x_1, x_2, y_1], y_2) - \nu(y_1,[x_1, x_2,y_2])),
\end{eqnarray*}
and
\begin{eqnarray*}
&&\theta^5_{\mathcal{E}, s, q}(x_1, x_2, y_1, y_2, y_3)\\
&=& \omega(x_1, x_2,[y_1, y_2, y_3])+\DD(x_1, x_2)\omega(y_1, y_2, y_3)\\
&&-\omega([x_1, x_2, y_1], y_2, y_3) - \omega([x_1, x_2,y_2], y_3, y_1) - \omega(y_1, y_2, [x_1, x_2,y_3])\\
&&-\theta(y_2, y_3)\omega(x_1,x_2,y_1)+ \theta(y_1, y_3)\omega(x_1,x_2,y_2) - \DD(y_1, y_2)\omega(x_1,x_2,y_3).
\end{eqnarray*}

Since $\partial$ is a map of crossed modules, it follows that
\begin{eqnarray*}
&&\partial\theta^3_{\mathcal{E}, s, q} (x,y,z)\\
&=&\partial\{\omega(x_1,x_2, x_3)+c.p.-[s(x_1),\nu(x_2,x_3)]-c.p.\\
&&+\nu([x_1,x_2],x_3)+c.p.\}\\
&=& \partial q ([s(x_1), s(x_2), s(x_3)] - s[x_1,x_2, x_3])\\
&&+\{\partial[s(x),   q ([s(y), s(z)] - s[y,z]) ]\\
&&- \partial q ([s([x,y]), s(z)] - s[[x,y],z])\}+c.p.\\
&=&\{[s(x_1), s(x_2), s(x_3)]+[[s(x_1), s(x_2)], s(x_3)]\}\\
&&- \{s[x_1,x_2, x_3]+c.p.+s([[x_1,x_2], x_3])+c.p.\}\\
&&- \{[s(x), s[y,z])]+c.p.+[s([x,y]), s(z)]+c.p.\}\\
&=& - [s(x), s[y,z])]-[s(y), s[z,x])]-[s(z), s[x,y])]\\
&& - [s([x,y]), s(z)]-[s([y,z]), s(x)]-[s([z,x]), s(y)]\\
&=&0.
\end{eqnarray*}
Thus $\theta^3_{\mathcal{E}, s, q} (x,y,z) \in \ker (\partial) = M$. The map $\theta^3_{\mathcal{E}, s, q} :  {T} ^{\otimes 3} \rightarrow M$ is well defined.

We also have
\begin{eqnarray*}
&&\partial\widehat{\theta}^4_{\mathcal{E}, s, q}( x_1, x_2,x_3, y_1)\\
&=&\partial([s(x_1), \nu(x_2,x_3), s(y_1)]+c.p.-\omega([x_1,x_2], x_3, y_1)-c.p.\\
&=&\partial\{([s(x_1), q ([s(x_2), s(x_3)] - s[x_2,x_3]) , s(y_1)]\}+c.p.\\
&&- \partial q\{([s([x_1,x_2]), s(x_3), s(y_1)] - s[[x_1,x_2], x_3,y_1]) -c.p.\}\\
&=&([s(x_1),  ([s(x_2), s(x_3)] , s(y_1)]-([s(x_1),   s[x_2,x_3]) , s(y_1)]+c.p.\\
&&- ([s([x_1,x_2]), s(x_3), s(y_1)] - s[[x_1,x_2], x_3,y_1]) -c.p.\\
&=&0.
\end{eqnarray*}
Thus $\widehat{\theta}^4_{\mathcal{E}, s, q}( x_1, x_2,x_3, y_1)\ \in \ker (\partial) = M$. The map $\widehat{\theta}^4_{\mathcal{E}, s, q}:  {T} ^{\otimes 4} \rightarrow M$ is well defined.

Similarly, one show that $\partial\widetilde{\theta}^4_{\mathcal{E}, s, q}( x_1, x_2,y_1, y_2) =\partial \theta^5_{\mathcal{E}, s, q}(x_1, x_2, y_1, y_2, y_3)=0$, thus the image of these maps are all belong to $\ker (\partial) = M$.

Moreover, it is easy to see that the map $\Theta_{\mathcal{E}, s, q}$ is a $(3,4,4,5)$-cocycle ($\delta(\Theta_{\mathcal{E}, s, q})=0$) in the cohomology group of $ {T} $ with coefficients in $M$. Therefore we obtain a map $\Phi$ from $\mathbf{CExt}(T,M)$ to $H^{(3,4,4,5)}(T, M)$.

\medskip
Step 2. We show that $\Theta_{\mathcal{E}, s, q}$ is independent of the choice of section $s$ and $q$.
\medskip

We show that the class of $\theta^3_{\mathcal{E}, s, q}$ in $H^{(3,4,4,5)}(T,M)$ does not depend on the section $s$. Suppose $\overline{s} :  {T}  \rightarrow S$ is another section of $\pi$ and let $\theta_{\mathcal{E}, \overline{s}, q}$ be the corresponding $(3,4,4,5)$-cocycle defined by using $\overline{s}$ instead of $s$. Then there exists a linear map $h :  {T}  \rightarrow V$ with $s - \overline{s} = \partial h$.
Now we get
\begin{align}\label{e-e'}
&(\theta^3_{\mathcal{E}, s, q} - \theta^3_{\mathcal{E}, \overline{s}, q}) (x,y,z) \nonumber \\
= ~&- [ ([s(y), s(z)] - s[y,z]), h(x) ] + [ ([s(x), s(z)] - s[x,z]), h(y) ] \nonumber \\
~&- [ ([s(x), s(y)] - s[x,y]), h(z) ]  + [ \overline{s}(x), (g - \overline{g})(y,z)]  \nonumber \\
~& - [ \overline{s}(y), (g - \overline{g})(x,z)] + [ \overline{s}(z), (g - \overline{g})(x,y)] \nonumber \\
~~&- (g - \overline{g}) ([x,y],z) + (g - \overline{g}) ([x,z], y) - (g - \overline{g}) ([y,z], x) \nonumber\\
= ~&\{[h(x), ([s(y), s(z)] - s[y,z]),  ]+[ \overline{s}(x), (g - \overline{g})(y,z)]\\
 ~&-(g - \overline{g}) ([x,y],z) \}+c.p. \nonumber
\end{align}

Define a map $b : \wedge^2  {T}  \rightarrow V$ by
\begin{align*}
b (x,y) &= [s(x), h(y)] - h ([x,y]) - [s(y), h(x)] - [ \partial h(x), h (y)]\\
&=[s(x), h(y)]- [s(y), h(x)] - h([x,y]) - [h(x), h(y)].
\end{align*}
Then by direct calculation one shows that  $\partial b = \partial (g - \overline{g})$.
Hence $(g - \overline{g} - b) : \wedge^2  {T}  \rightarrow M=\ker \partial$.
It follows from (\ref{e-e'}) that
\begin{align*}
 &(\theta^3_{\mathcal{E}, s, q} - \theta^3_{\mathcal{E}, \overline{s}, q}) (x,y,z) \\
=~& \{[h(x), ([s(y), s(z)] - s[y,z])] + [ \overline{s}(x), b (y, z)] - b ([x,y],z)\}+c.p.\\
~& + (\delta (g - \overline{g} - b)) (x,y,z)\\
=~& (\delta (g - \overline{g} - b)) (x,y,z).
\end{align*}
Thus the class of $\theta^3_{\mathcal{E}, s, q}$ does not depend on the section $s$.
Similarly, one show that $\partial\widetilde{\theta}^4_{\mathcal{E}, s, q}$, $\partial\widehat{\theta}^4_{\mathcal{E}, s, q}$, $\partial \theta^5_{\mathcal{E}, s, q}$ are independent on the section $s$.

Moreover, one show that two crossed module extensions of Lie triple algebras are equivalent if and only if $\Theta_{\mathcal{E}, s, q}$ and $\Theta'_{\mathcal{E}, s, q}$ are in the same cohomology class.

\medskip
Step 3.  We define a map $\Psi:H^{(3,4,4,5)}({T}, {M}) \rightarrow{\bf CExt}({T}, {M})$ as follows.
\medskip

Let $\mathcal{F}({T})$ be the free Lie triple algebra of ${T}$ over the vector space ${T}$ using the similar method in \cite{MS,St}\footnote{In fact, we only use the exists of free Lie triple algebra but not its precisely construction in this paper.} and ${\pi}: \mathcal{F}({T}) \rightarrow {T}$ be the canonical projection. Consider the cochain complexes
$$
\begin{aligned}
&{C}^{(2,3)}({T}, {M}) \quad\stackrel{\Delta_2}{\longrightarrow}\quad{C}^{(3,4,4,5)}({T}, {M}) \stackrel{\Delta_3}{\longrightarrow} \cdots \\
& \qquad \pi^* \downarrow \quad\quad\quad\quad\quad\qquad  \pi^* \downarrow \\
&{C}^{(2,3)}(\mathcal{F}({T}), {M}) \stackrel{\Delta_2}{\longrightarrow}{C}^{(3,4,4,5)}(\mathcal{F}({T}), {M}) \stackrel{\Delta_3}{\longrightarrow} \cdots \\
\end{aligned}
$$
 For any (3,4,4,5)-cocycle $(l_3,\widehat{l}_4, \widetilde{l}_4,l_5)\in {C}^{(3,4,4,5)}({T}, {M})$, then there exists a cochain $(\nu,\omega)\in {C}^{(2,3)}(\mathcal{F}({T}), {M})$ such that $ \pi^*  (l_3,\widehat{l}_4, \widetilde{l}_4,l_5)=\Delta_2(\nu,\omega)$. Now we claim that the following sequence is a crossed extension of ${T}$ by ${M}$
$$
\mathscr{E}: 0 \longrightarrow {M} \stackrel{i}{\longrightarrow} {M} \oplus {V} \stackrel{\mu }{\longrightarrow} \mathcal{F}({T}) \stackrel{\pi}{\longrightarrow} {T} \longrightarrow 0,
$$
where ${V}=\operatorname{Ker} \pi$. The Lie triple algebra structure on ${M} \oplus {V}$ is given by
\begin{eqnarray*}
\left[m_1+ v_1,m_2+v_2\right]&=&\nu\left(v_1, v_2\right)+[v_1, v_2]\\
\left[[m_1+ v_1,m_2+v_2, m_3+v_3\right]&=&\omega\left(v_1, v_2, v_3\right)+[v_1, v_2, v_3],
\end{eqnarray*}
the action of $\mathcal{F}({T})$ on ${M} \oplus {V}$ is given by
\begin{eqnarray*}
&&{\rho(x)(m+v)}= \rho(\widetilde{x})(m)+\nu(x, v)+[x,v],\\
&& \theta({x_1}, {x_2})(m+v)=\theta(\widetilde{x}_1, \widetilde{x}_2)(m)+\omega({x_1}, {x_2}, v)+[{x_1}, {x_2}, v], \\
&& \DD({x_1}, {x_2})(m+v)=\DD(\widetilde{x}_1, \widetilde{x}_2))(m)+\omega(v, {x_1}, {x_2})+[v, {x_1}, {x_2}].
\end{eqnarray*}
and the map $\mu $ is defined by $\mu (m, v)=v$, for every $m, m_i \in {M}, v, v_i \in {V}$,  ${x}, {x_1}, {x_2} \in \mathcal{F}({T})$
and $\widetilde{x}_1=\pi({x_1}),\widetilde{x}_2=\pi({x_2})\in {T}$.
It is routine to show that this defines an action of $\mathcal{F}({T})$ on ${M} \oplus {V}$.
Thus we obtain that $\mathscr{E}$ is a crossed extension of ${T}$ by ${M}$ and we define $\Psi (l_3,\widehat{l}_4, \widetilde{l}_4,l_5)$ to be the class of $\mathscr{E}$ in ${\bf CExt}({T}, {M})$.

Finally, it is straightforward to show that that $\Psi$ is a well defined map from $H^{(3,4,4,5)}({T}, {M})$ to ${\bf CExt}({T}, {M})$,
and the maps $\Phi$ and $\Psi$ are inverses to each other.
Thus by the above argument, we obtain $\Phi$ is a bijective map.
The proof is completed.
\end{proof}

\section*{Acknowledgements}
We would like to thank Jonatan Stava for  answering our questions about free invariant connection algebras and free Lie triple algebras in \cite{MS,St}.

\section*{Declarations}

 {\bf Competing interests} There are no conflicts of interest for this work.

\noindent {\bf Availability of data and materials} Data sharing is not applicable to this article as no datasets were generated or analyzed during the current study.

\vskip7pt
\footnotesize{
\noindent Tao Zhang\\
College of Mathematics and Information Science,\\
Henan Normal University, Xinxiang 453007, P. R. China;\\
 E-mail address: \texttt{{zhangtao@htu.edu.cn}}

\vskip7pt
\footnotesize{
\noindent Zhang-Ju Liu\\
College of Mathematics and Statistics,\\
Henan University, Kaifeng 475000, P. R. China;\\
 E-mail address:\texttt{{ liuzj@pku.edu.cn}}
}

\end{document}